\newcolumntype{C}{>{$}c<{$}}
\newcolumntype{L}{>{$}l<{$}}
\newcolumntype{R}{>{$}r<{$}}
\newcommand{\dual}{\vee}
\newcommand{\bbZ}{\mathbb{Z}}
\newcommand{\bbP}{\mathbb{P}}
\newcommand{\bbC}{\mathbb{C}}
\newcommand{\bbN}{\mathbb{N}}
\newcommand{\bbGr}{\mathbb{G}r}
\newcommand{\sO}{\mathcal{O}}
\newcommand{\sN}{\mathcal{N}}
\newcommand{\isom}{\cong}
\newcommand{\into}{\rightarrow}
\DeclareMathOperator{\rank}{rank}
\DeclareMathOperator{\codim}{codim}
\DeclareMathOperator{\Hilb}{Hilb}
\DeclareMathOperator{\Cat}{Cat}
\DeclareMathOperator{\sym}{Sym}  
\theoremstyle{plain}
\newtheorem{teo}{Theorem}[section]
\newtheorem{teo*}{Theorem}
\newtheorem{propo}[teo]{Proposition}
\newtheorem{cor}[teo]{Corollary}
\newtheorem{lem}[teo]{Lemma}
\theoremstyle{definition} 
\newtheorem{definition}[teo]{Definition}
\theoremstyle{remark}
\newtheorem{remark}[teo]{Remark}
\newtheorem{fn*}{Formule Notevoli}
\begin{document}
\title{Normal Bundle of Rational Curves and Waring Decomposition}
\author{Alessandro Bernardi}
\address{Dipartimento di Matematica, University of Bologna}
\email{alessandro.bernardi8@unibo.it}

\date{\today}
\begin{abstract}
The problem of determining the splitting of the normal bundle of rational space curves has been considered in the 80s in a series of papers by Ghione and Sacchiero and  by Eisenbud and Van de Ven. With our approach we are able to obtain results for curves embedded in $\bbP^m$ for $m\geq 3$ and we find an interesting interplay between the Waring decomposition of binary forms and the splitting of the normal
bundle.

\end{abstract}
\maketitle



\section{Introduction}
In this work we address the problem of classifying the rational curves $C\subset \bbP^m$ of degree  $n$ (with $n\geq m$) by the splitting type of their normal bundle $N_{C;\bbP^m}$ and their restricted tangent bundle $T\bbP^m|_C$. 

We choose a projective point of view, by considering the rational normal curve $C_n\subset \bbP^n$ so to view our degree $n$ curves as projections of $C_n$ from a linear space $L\cong \bbP^{k-1}$. The projected curves $C$ lie in a projective space of dimension $m=(n-k)$.

We point out that we are interested to the case with at most ordinary singularities, as in the work of Ghione and Sacchiero (see \cite{Ghione-Sacchiero}).

Since the curve $C$ can be singular, it is necessary to point  out that with $T\mathbb{P}^m \vert C$ and $N_{C,\mathbb{P}^m}$ we mean (with a slight abuse of notation)  $p^*(T\mathbb{P}^m\vert C)$ and $p^*(N_{C,\mathbb{P}^m})$, where $p: \mathbb{P}^1 \rightarrow C$ is given by $p = \pi_L \circ \nu_n$ (the composition of the $n$-th Veronese embedding with the projection from $L$). 

We define the scheme $H^{m,n}$ as the component of the Hilbert scheme $\Hilb_n\bbP^m$ of arithmetic genus zero curves of degree $n$ in $\bbP^m$ containing the smooth curves as an open subset.

We denote with $\sN^n_{n-k}(n_1,...,n_{n-k-1})$ the subscheme of $H^{m,n}$ which {param\-etrizes} the curves such that the splitting type of their normal bundle $N_{C;\bbP^{n-k}}$ is $(n_1,...,n_{n-k-1})$.

We will be working in the Grassmannian $Gr(\mathbb{P}^{k-1},\mathbb{P}^n)$, where we consider its subscheme $N^n_{n-k}(n_1,...,n_{n-k-1})$, which {param\-etrizes} subspaces $L$ such that the curve $C=\pi_L(C_n)$ is such that $N_{C,\mathbb{P}^{n-k}} \simeq \oplus_{i=1}^{n-k-1}\mathcal{O}_{\mathbb{P}^1}(n_i)$. $N^n_{n-k}(n_1,...,n_{n-k-1})$ has the same codimension and number of irreducible\newline components as $\sN^n_{n-k}(n_1,...,n_{n-k-1})$.

We can study directly the basic structure of these subvarieties in the Grassmannian.
 Since the dimension of the Grassmannian is lower than the dimension of the Hilbert scheme, this allows easier computations.

The main advantage of this approach is that we can relate the splitting of the normal bundle of the projected curve to geometric properties of the subspace $L$.

The main novelty is the interplay between the Waring decomposition of forms and some geometric properties of the subspace $L$.

A Waring decomposition of a form $f$ of degree $n$ is an expression $f=\sum^p_{i=1}l^n_i$ where $\deg l_i=1$. Such an expression means that the point corresponding to $f$ in $\bbP^n\isom \bbP(\bbC[s,t]_n)$ belongs to a $(p-1)-$secant space $\bbP^p$ to the rational normal curve. More recent contributions for the simultaneous Waring problem can be found in the work of Iarrobino (see \cite{iarrobinoAncestor}).

If we define for all $n\in \bbN$, $\rho _r^{n,k}\in \mathbb{N}$ as follows:
$$
\rho^{n,k}_r:=\left\lbrace
\begin{array}{cl}
n-3k+r-1 &\mbox{ , for } 3k< n-1 \mbox{ and }  r \leq 2k-1 ;\\
r & \mbox{ , for } 3k\geq n-1 \mbox{ and } 1\leq r<n-k.
\end{array}
\right.
$$

then when $\rho^{n,k}_r\leq (n-k+1)/3$, we can prove (Theorem \ref{mainthmn3}) that the following conditions are equivalent:

\begin{itemize}
\item[i)] the curve $C=\pi_L(C_n)\subset \bbP^{n-k}$ projected from $L\isom \bbP^{k-1}$, has  
$$
N_{\pi_{L}(C_n);\bbP^{n-k}}\isom \sO_{\bbP^1}(n+2)^{\rho^{n,k}_r}\oplus\mathcal{F},
$$ 
where $\deg(\mathcal{F}^{\vee}(n+2))=-2k$ and $\mathcal{F}\isom \bigoplus^{n-1-\rho^{n,k}_r-k}_i \sO_{\bbP^1}(l_i)$, with $l_i\geq n+3$;

\item[ii)] the centre of projection $L\isom\bbP^{k-1}$ is contained in the base locus of a linear system $\Phi$ generated by $\rho_r^{n,k}$ linearly independent $\bbP^{n-3}$'s which intersect $C_n$ in degree $n-2$.

\end{itemize}

%
%
%
We want to point out that the summand $\sO_{\bbP^1}(n+2)$ is of particular interest because it has the smallest possible degree in the splitting decomposition of the normal bundle in the ordinary singularities case. Moreover we prove (Theorem \ref{FinallyTheorem}) the following result:

\textbf{Theorem}
\emph{For any $0\leq \alpha \leq n-k-2$, the variety parametrizing rational curves $C$ of degree $n$ in $\bbP^{n-k}$ whose normal bundle $N_{C,\bbP^{n-k}}$ has the summand $\sO_C(n+2)^{\alpha}$ is irreducible.}

This kind of problems has been considered in the 80s in a series of papers  in the case of rational curves in $\bbP^3$ by Ghione and Sacchiero (see \cite{Ghione-Sacchiero} and \cite{Sacchiero2}) and by Eisenbud and Van de Ven (see \cite{Eisenbud-VandeVen1} and \cite{Eisenbud-VandeVen2}).
They used two different points of view but their results can be summarized in the following way: 

The varieties $N^n_{3}(n_1,n_2)$, for any $n_1+n_2=2n-2$, are irreducible of codimension $\codim(N^n_{3}(n_1,n_{2}))=\dim Ext^1(N_{C;\bbP^{3}}N_{C;\bbP^{3}})$, for a general $C\in N^n_{3}(n_1,n_{2})$.  

Few facts in this line of research are known for $\bbP^{m}$, with $m=n-k\geq 4$. The only two papers devoted to this case are the ones of Sacchiero (see \cite{Sacchiero}) and of Miret (see \cite{Miret}).

We work also with the splitting of the restricted tangent bundle $T\bbP^{n-k}|_{\pi_L(C_n)}$. The first results in this direction were given by Ramella in her doctoral thesis (see \cite{Ramella2}) where she proved that the splitting of the normal bundle and of the tangent bundle are surprisingly very little related. Let us mention that Verdier claimed (without proof) in \cite{verdier} a very general result, saying that the varieties of rational curves of degree $n$ in any $\bbP^{n-k}$ with fixed splitting type of their restricted tangent bundle are irreducible of codimension $ext^1(T\bbP^{n-k}|_{C}(-1),T\bbP^{n-k}|_{C}(-1))$.
The proof of this result was clarified later by Ramella (see \cite{Ramella1}).

If we define for all $n\in \bbN$, $\delta^{n,k}_r\in \mathbb{N}$ as follows:
$$
\delta^{n,k}_r:=\left\lbrace
\begin{array}{cl}
r & \mbox{ , for } 2k\leq n \mbox{ and } 1\leq 2r\leq k-1;\\
n-3k+r-1 &\mbox{ , for } 2k > n \mbox{ and }  r \leq n-k-1.
\end{array}
\right.
$$

the when $\delta^{n,k}_r\leq (n-k+1)/2$, we can prove in Theorem \ref{mainthmtg3} a similar result to what is done in Theorem \ref{mainthmn3} for the normal bundle. Namely, we have that the following conditions are equivalent:
\begin{itemize}

\item[i)] the curve $C=\pi_L(C_n)\subset \bbP^{n-k}$ projected from $L\isom \bbP^{k-1}$, has 
$$
T\bbP^{n-k}|_{\pi_L(C_n)}\isom \sO_{\bbP^1}(n+1)^{\delta^{n,k}_r}\oplus\sO_{\bbP^1}(n+1+B)^{A-k+B\cdot A}\oplus \sO_{\bbP^1}(n+2+B)^{k-B\cdot A}), 
$$

where $A=n-\delta^{n,k}_r-k$ and $B=\lfloor \frac{k}{A}\rfloor$.

\item[ii)] the centre of projection $L\isom \bbP^{k-1}$ is contained in the base locus of a linear system $\Phi$ generated by $\delta^{n,k}_r$ linearly independent $\bbP^{n-2}$'s which intersect $C_n$ in degree $n-1$.

\end{itemize}

In the last section we describe the case of rational space curves of degree $5$. In this case the geometric description allows also  to prove the irreducibility and to compute the codimension and the multi-degree of $N^5_3(7,11)$.

In particular we prove in Theorem \ref{thmC5} (p.\pageref{thmC5}) the following result:

$N^5_2(7,11)$ is an irreducible variety of codimension $3$ and bidegree $(1,6)$ formed by the lines $L\isom\bbP^{1}$ that belong to a $3-$secant plane to the rational normal curve $C_5$.

More recent contributions in the case of plane curves can be found in the work of Gimigliano, Harbourne and Ida (see \cite{GHI}) where they related the above problems to the problem of determining the resolution of plane fat point schemes.


\section{Apolarity and Waring's Problem}
\subsection{Catalecticant and Apolarity Setup}
In the following section we want to fix the notation and to recall some results about connections with apolarity theory, catalecticant homomorphisms for binary forms and secant varieties of rational normal curves (see \cite{iarrobinokanev}).
\subsubsection{Contraction Action and Catalecticant Morphism}
Let $V$ be a complex vector space with $\dim_{\bbC}V=2$ and basis $x_0,x_1$. We consider the ring of homogeneous polynomial $S=\bigoplus_{i\geq 0}\sym^iV$, where $\sym^iV$ is the $i-$th symmetric power of $V$. The dual basis of $V^{\vee}$ can be denoted by $\partial_0=\frac{\partial}{\partial x_0},\partial_1=\frac{\partial}{\partial x_1}$ and $T=\bigoplus_{i\geq 0}\sym^iV^{\vee}$ is the dual ring of $S$. The $\partial _i$'s are operators acting on the $x_j$'s and viceversa. This is called the contraction action.

\begin{definition}
If we fix a form $f\in \sym^nV$, we can associate to $f$ a catalecticant homomorphism for all $1\leq e\leq n-1$:
$$
C_f(e,n-e):\sym^{n-e}V^{\dual}\into \sym^{e}V,\;\; \mbox{ where } \phi\into \phi\circ f,
$$
the corresponding matrix is called the catalecticant matrix $\Cat_f(e,n-e)$, where we choose as basis for $S_e$ the monomials $x^{[E]}=\frac{1}{e_0!e_1!}x_0^{e_0}x_1^{e_1}$ with $E=(e_0,e_1)\in \bbN^{2}$ and $e_0+e_1=e$. For a binary form $f\in \sym^nV$ :
$$
f=a_0x_0^n+...+\binom{n}{d}a_dx^{n-d}_0x^{d}_1+...+a_nx^n_1,
$$
the corresponding $e-$th catalecticant matrix is : 
$$
\Cat_f(e,n-e)=
\left(
\begin{array}{cccccc}
a_0&a_1&\cdots&a_{n-e-1}&a_{n-e}\\
a_1&a_2&\cdots&a_{n-e}&a_{n-e+1}\\
\vdots&\vdots&\ddots&\vdots&\vdots\\
a_{e-1}& a_e &\cdots&a_{n-2}&a_{n-1}\\
a_e& a_{e+1} &\cdots&a_{n-1}&a_n
\end{array}\right).
$$
\end{definition}
\begin{remark}
Clearly the transpose $\Cat_f(e,n-e)^t$ satisfies:
$$
\Cat_f(e,n-e)^t=\Cat_f(n-e,e).
$$
\end{remark}

\begin{definition}
To each element $f\in S_n$ we can associate the ideal $I_f=Ann(f)$ in $T$, consisting of polynomials $\phi$ such that $\phi\circ f=0$. We call $\phi$ and $f$ \emph{apolar} to each other and $I_f$ is the \emph{apolar ideal} of $f$.
\end{definition}

One associates to $f$ also the quotient algebra $A_f=T/I_f$. Macaulay called the ideal $I_f$ a "principal system", while we know them as Gorenstein ideals, since $A_f$ is a Gorenstein Artin algebra. 

If $p=(a_0,a_1)\in\bbC^{2}$, let $L_p$ denote the linear form:
$$
L_p=a_0x_0+a_1x_1\in S_1.
$$
\begin{remark}
We have the following useful equality:
$$
\phi\circ L^{n}_p=\phi(p)L^{n-e}_p,
$$
for all $\phi\in T_e$, $e\leq n$ and any $L_p$. With abuse of notation we will write $\phi(L_p)$ for $\phi(p)$.
\end{remark}

\begin{definition}
Consider the forms $f\in S_n$ that can be written as a sum:
$$
f=L^{n}_1+...+L^{n}_s,
$$
for some choice of the linear forms $L_1,...,L_s\in S_1$. They form the image of the regular map:
$$
\mu:\overbrace{S_1\times...\times S_1}^{s-times}\into S_n,
$$
defined by $\mu(L_1,...,L_s)=L^{n}_1+...L^{n}_s$. Let us denote by $PS(s,n)$ this image. Its algebraic closure is an irreducible affine variety and it is invariant under multiplication by elements of $\bbC^*$.
\end{definition}
\begin{remark}\label{secantremark}
If we consider the n-th Veronese map $\nu_n:\bbP^1\into \bbP^n$, its image $C_n=\nu_n(\bbP^{1}),\;\; \bbP^1\isom\bbP(S_1)$ is the rational normal curve of degree $n$, $C_n\subset \bbP^n\isom \bbP(S_n)$. If $s\leq \dim_{\bbC}T_n=n+1$, then for any forms $L_1,...,L_s$ the projectivization of the span $<L^{n}_1,...,L^{n}_s>$ is an $(s-1)-$plane that intersects the rational normal curve in the points $\nu_n(<L_i>)=<L^{n}_i>$, $i=1,..,s$. Thus $\overline{\bbP PS(s,n)}$ is exactly the $s-$secant variety to the rational normal curve: $\sigma_{s}(\nu_n(\bbP^1))$.
\end{remark}
\begin{remark}
If $f\in PS(s,n)$ and $f=L^{n}_1+...+L^{n}_s$, then for $e\leq n$ and every $\phi\in T_{n-e}$, we have:
$$
\phi\circ f=\phi(L_1)L_1^{e}+...+\phi(L_s)L^{e}_s.
$$
This shows that $S_e(f)$ the image of the catalecticant homomorphism $C_f(e,n-e)$ has dimension $\leq s$.  Hence the $(s+1)\times(s+1)$ minors of the catalecticant matrices $\Cat_f(e,n-e)$ vanish on $PS(s,n)$.
\end{remark}

\subsection{The Grassmannians of secant varieties of curves}
It is well known that curves $C$ in $\bbP^n$ are not defective (see for example \cite{Zak}), i.e the secant varieties $\sigma_r(C)$ all have the expected dimension $\min\{n,2r+1\}$.
Moreover, we have the following well known result (see for example \cite{harrisalggeo}):
\begin{propo}
A smooth, non degenerate projective curve $C\subset \bbP^n$ is projected isomorphically from a point $p\in \bbP^n$ to $\bbP^{n-1}$ if and only if $p$ does not belong to the secant variety $\sigma_2(C)$ of $C$.
\end{propo}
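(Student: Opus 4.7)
The plan is to prove the two directions separately by unpacking what it means for the projection $\pi_p|_C : C \to \bbP^{n-1}$ to be an isomorphism onto its image, and comparing with the geometric content of $p \in \sigma_2(C)$.

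First I would recall that $\sigma_2(C)$ is the Zariski closure of the union of the (proper) secant lines of $C$, and that this closure automatically contains every tangent line $T_q C$ (as the limit of secants through $q$ and a nearby point $q'\to q$). So $p\in \sigma_2(C)$ means $p$ lies on either a chord joining two distinct points of $C$ or on a tangent line to $C$. Next I would observe the two ways $\pi_p|_C$ can fail to be an isomorphism: (a) failure of injectivity, i.e.\ there exist distinct $q_1,q_2\in C$ with $\pi_p(q_1)=\pi_p(q_2)$, equivalently $p,q_1,q_2$ are collinear, i.e.\ $p$ lies on a proper secant line; (b) failure to be an immersion at some $q\in C$, i.e.\ the differential $d\pi_p$ has a kernel on $T_qC$, equivalently the line $\overline{pq}$ equals $T_qC$, i.e.\ $p\in T_qC$.

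For the reverse implication, if $p\in \sigma_2(C)$ then one of these two geometric conditions holds by the description above, and in either case $\pi_p|_C$ fails to be an isomorphism onto its image.

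For the forward implication, suppose $p\notin \sigma_2(C)$. Then $p\notin C$, so $\pi_p|_C$ is a morphism; no chord and no tangent line of $C$ passes through $p$, so $\pi_p|_C$ is injective and its differential is injective at every point. Thus $\pi_p|_C : C \to \pi_p(C)$ is a bijective unramified morphism between projective curves; since $C$ is smooth and $\pi_p|_C$ is finite and birational onto its (normal outside the image, but here we invoke) image, Zariski's Main Theorem (equivalently: a finite birational morphism from a smooth curve is an isomorphism onto its image provided the image is normal, and a bijective immersion of smooth projective curves is automatically an isomorphism) gives that $\pi_p|_C$ is an isomorphism onto its image. The main obstacle I expect is the last step: going from injective+unramified to isomorphism onto the image requires a little care, since the image in $\bbP^{n-1}$ need not be smooth a priori; I would handle this by factoring $\pi_p|_C$ through the normalization of $\pi_p(C)$, noting that both morphisms involved are bijective and that $\pi_p|_C$ is unramified, hence the composed map is an isomorphism and therefore the image is smooth and $\pi_p|_C$ is itself an isomorphism.
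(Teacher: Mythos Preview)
Your argument is essentially correct and follows the standard approach (as in Harris's textbook, which the paper cites). Note, however, that the paper does not actually supply a proof of this proposition: it is stated as a ``well known result (see for example \cite{harrisalggeo})'' and is used only as background for the discussion of higher secant Grassmannians. So there is no proof in the paper to compare against; your write-up simply fills in what the paper omits.

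One small comment on the delicate last step you flagged: once you know $\pi_p|_C$ is proper, injective on points, and unramified (over $\bbC$), it is a proper monomorphism of schemes, hence a closed immersion; this gives directly that $\pi_p|_C$ is an isomorphism onto its (therefore smooth) image, without needing to pass through the normalization explicitly. Either formulation is fine.
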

We are interested in considering projections of curves from some linear subspace. In this case if a linear span $H$ of $r+1$ points of $C$ contains the center of projection, then it determines a $(r+1)-$secant space for the image $C'$ of dimension less then $r$. In analogy with the theory of secant varieties $\sigma_{r}(C)$ one may ask about the expected dimension of these Grassmannians of secant spaces.

\begin{definition}
Let $C\subset \bbP^n$ be an irreducible non degenerate curve. We define the secant varieties $G_r(C)\subset Gr(\bbP^r,\bbP^n)$, where:
$$
G_r(C)=\overline{\{H\subset \bbP^n:H\mbox{ is the span of } r+1 \mbox{ independent points of } C\}},
$$
and
$$
\sigma_r^{n,k}(C) =\overline{\{p\in \bbP^n: p\in H \mbox{ for some } H\in G_r(C)\}},
$$        
Since $C$ is irreducible, then $G_r(C)$ is irreducible of dimension $r+1$.
\end{definition}
If one considers, in the incidence variety of $Gr(\bbP^{r},\bbP^n)\times \bbP^n$, the subsets:
$$
I(C)=\{(H,p):p\in H, H \mbox{ is spanned by } r+1 \mbox{ independent points of } C\},
$$
then $G_r(C),\sigma_r^{n,k}(C)$ correspond to the closures of the images of the two natural projections $I(C)\into Gr(\bbP^{r},\bbP^n)$ and $I(C)\into \bbP^n$. In particular $\dim I(C)=2r+1$ and since $C$ is not defective, the map $I(C)\into \bbP^n$ is generically finite when $2r+1\leq n$, while otherwise it has general fibers of dimension $2r+1-n$.
\begin{definition}
We denote by $G_{s,r}(C)$ the following subset of $Gr(\bbP^s,\bbP^n)$:
$$
G_{s,r}(C)=\overline{\{ h\in Gr(\bbP^s,\bbP^n): h\subset H \mbox{ for some } H\in G_r(C) \}}.
$$
These objects are the Grassmannians of secant varieties of $C$. Observe that $G_{0,r}(C)$ coincides with $\sigma_r(C)$.
\end{definition}
The elements of $G_{s,r}(C)$ are contained in the Grassmannian of $s-$spaces of some $k-$spaces $H\in G_r(C)$. Thus we always have:
$$
\dim G_{s,r}(C)\leq \dim Gr(\bbP^s,\bbP^r)+G_r(C)=(s+1)(r-s)+k+1.
$$  
Furthermore:
$$
\dim G_{s,r}(C)\leq \dim Gr(\bbP^s,\bbP^n)=(s+1)(n-s).
$$
\begin{definition}
We define the expected dimension of $G_{s,r}(C)$ as:
$$
expdim(G_{s,r}(C)=\min \{ (s+1)(n-s), (s+1)(r-s)+r+1 \}.
$$
\end{definition}
Chiantini and Ciliberto proved that the actual dimension of $G_{s,r}(C)$ is always equal to the expected one.
\begin{teo}[\cite{chiantiniciliberto}]\label{chiantiniciliberto}
The dimension of $G_{s,r}(C)$ is equal to the expected dimension:
$$
\min \{ (s+1)(n-s), (s+1)(r-s)+r+1 \}.
$$
\end{teo}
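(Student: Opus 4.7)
The plan is to work with the incidence variety
$$
J = \{(h, H) \in Gr(\bbP^s,\bbP^n) \times G_r(C) : h \subset H\},
$$
together with its projections $\pi_1$ onto $Gr(\bbP^s,\bbP^n)$ (whose image closure is $G_{s,r}(C)$) and $\pi_2$ onto $G_r(C)$. The two upper bounds on $\dim G_{s,r}(C)$ are immediate from the definition: the fibre of $\pi_2$ over any $H$ is the sub-Grassmannian of $s$-planes inside an $r$-plane, of dimension $(s+1)(r-s)$, so $\dim J = (s+1)(r-s)+r+1$ and hence $\dim G_{s,r}(C) \leq (s+1)(r-s)+r+1$; the bound $\dim G_{s,r}(C) \leq (s+1)(n-s)$ follows from the inclusion into the ambient Grassmannian. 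Matching the minimum from below then amounts to analysing $\pi_1$ in two separate regimes.

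In the regime $(s+1)(r-s)+r+1 \leq (s+1)(n-s)$, equality reduces to showing that $\pi_1$ is generically finite. Let $\pi_h\colon \bbP^n \to \bbP^{n-s-1}$ be the projection from a general $h \in G_{s,r}(C)$ and put $C' = \pi_h(C)$. An $(r+1)$-tuple on $C$ spanning an $r$-plane through $h$ corresponds, after projection, to $r+1$ points of $C'$ lying on an $(r-s-1)$-plane in $\bbP^{n-s-1}$; a one-parameter deformation of the tuple producing a one-parameter deformation of $H$ inside $J$ would force the secant map from $(C')^{r+1}$ to $Gr(\bbP^{r-s-1},\bbP^{n-s-1})$ to have positive-dimensional fibres, contradicting the non-defectivity of the irreducible curve $C'$.

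In the complementary regime $(s+1)(n-s) \leq (s+1)(r-s)+r+1$, equivalent to $(s+1)(n-r)\leq r+1$, I would show instead that $G_{s,r}(C) = Gr(\bbP^s,\bbP^n)$, i.e.\ every $s$-plane $h$ sits inside some $(r+1)$-secant $r$-plane. After projecting from $h$ the claim becomes that there exists an $(r-s-1)$-plane in $\bbP^{n-s-1}$ meeting $C'$ in $r+1$ points. A parameter count on the incidence $\{(q_1,\ldots,q_{r+1},\Pi):q_i\in C'\cap\Pi\} \subset (C')^{r+1}\times Gr(\bbP^{r-s-1},\bbP^{n-s-1})$, together with non-defectivity of $C'$, shows the projection onto $(C')^{r+1}$ is dominant exactly under the numerical hypothesis $(s+1)(n-r)\leq r+1$, which is precisely the regime we are in.

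The main obstacle will be carrying out the Terracini-style tangent-space computation at a general point $(h,H)\in J$ in order to pin down the rank of $d\pi_1$ in each regime. This amounts to identifying the tangent space to $G_r(C)$ at $H = \langle p_1,\ldots,p_{r+1}\rangle$ with a suitable quotient of the sum of the affine tangent spaces to $C$ at the $p_i$'s, and then reading off the kernel and image of the differential of the inclusion-into-ambient-Grassmannian map modulo the $s$-plane $h$. Once the correct rank is established in both cases, the equality $\dim G_{s,r}(C) = \min\{(s+1)(n-s),\,(s+1)(r-s)+r+1\}$ follows.
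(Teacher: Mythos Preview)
The paper does not prove this statement: it is quoted from \cite{chiantiniciliberto} with no argument given, so there is nothing in the paper itself to compare your proposal against.

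On its own merits, the incidence variety $J$ and the two upper bounds are set up correctly, and splitting into the two numerical regimes is the natural strategy. The genuine gap is in the first regime. You assert that a positive-dimensional fibre $\pi_1^{-1}(h)$ would force ``the secant map from $(C')^{r+1}$ to $Gr(\bbP^{r-s-1},\bbP^{n-s-1})$ to have positive-dimensional fibres'', but a one-parameter family of $r$-planes $H\supset h$ projects to a one-parameter family of \emph{distinct} $(r-s-1)$-planes $H/h$, so what you obtain is a positive-dimensional \emph{image}, not positive-dimensional fibres. Worse, the map you write is not defined on general $(r+1)$-tuples of $C'$ at all, since those span an $r$-plane, not an $(r-s-1)$-plane; and the classical non-defectivity of curves you invoke only controls the dimension of ordinary secant varieties (the case $s=0$), not the dimension of the locus of degenerate $(r+1)$-tuples you actually need. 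What must really be shown is that for general $h$ only finitely many $(r+1)$-tuples on $C$ have span containing $h$, and that is essentially the content of the theorem restated. Your own final paragraph correctly identifies the Terracini-type tangent-space computation as the step where the real work lies; until that is carried out, both regimes are heuristic parameter counts rather than proofs, and this is precisely what the Chiantini--Ciliberto paper supplies.
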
                                
There is a relationship between $G_{s,r}(C_n)$ and simultaneous additive decompositions of a set of binary forms, as this easy result shows:
\begin{propo}\label{dec.simultanee}
Let $f_0,...,f_s\in S_n$. They have a simultaneous additive decomposition, i.e. there exist $L_1,...,L_{r+1}\in S_1$ such that:
$$
f_i=c_1^iL^n_1+...+c^i_{r+1}L^n_{r+1},\;\; i=0,...,s,
$$
with $c^i_j\in \bbC$, if and only if the corresponding points $p_{f_i}\in\bbP^n$ belong to some $(r+1)-$secant space $H$ to $C_n$. If they are linearly independent, we have $<p_{f_0},...,p_{f_s}>\in G_{s,r}(C_n)$. 
\end{propo}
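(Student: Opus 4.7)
The plan is to exploit the correspondence, recalled in Remark \ref{secantremark}, between points of the rational normal curve $C_n\subset \bbP(S_n)$ and classes $[L^n]$ of $n$-th powers of linear forms $L\in S_1$. Granted this dictionary, the proposition reduces to a direct translation between linear combinations of the $L_j^n$ inside $S_n$ and linear incidences of the corresponding points in $\bbP(S_n)$.

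First I would treat the direction from decompositions to secant incidences. Suppose $f_i=\sum_{j=1}^{r+1} c_j^i L_j^n$ for every $i=0,\ldots,s$. Since $\nu_n([L_j])=[L_j^n]$, each point $[L_j^n]$ lies on $C_n$, and hence their span $H=\langle [L_1^n],\ldots,[L_{r+1}^n]\rangle$ is an $(r+1)$-secant space to $C_n$. The identity $f_i=\sum_j c_j^i L_j^n$ in $S_n$ is precisely the assertion $p_{f_i}\in H$.

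For the converse, assume $p_{f_0},\ldots,p_{f_s}$ all lie in some $(r+1)$-secant space $H\in G_r(C_n)$. Because every point of $C_n$ is of the form $[L^n]$, we may write $H=\langle [L_1^n],\ldots,[L_{r+1}^n]\rangle$ for suitable $L_j\in S_1$; then the membership $p_{f_i}\in H$ produces scalars $c_j^i\in \bbC$ with $f_i=\sum_{j=1}^{r+1} c_j^i L_j^n$, the projective ambiguity being absorbed into the $c_j^i$. This is exactly the claimed simultaneous additive decomposition. For the final clause, if $p_{f_0},\ldots,p_{f_s}$ are linearly independent then $\langle p_{f_0},\ldots,p_{f_s}\rangle$ is an $s$-dimensional linear subspace contained in the element $H\in G_r(C_n)$, so by the definition of $G_{s,r}(C_n)$ we obtain $\langle p_{f_0},\ldots,p_{f_s}\rangle\in G_{s,r}(C_n)$.

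The only delicate aspect is the treatment of the boundary of $G_r(C_n)$, where the $r+1$ points of $C_n$ spanning $H$ may coincide (so that the $L_j^n$ must degenerate to ``osculating'' powers of a single linear form) or become linearly dependent. The argument above is transparent on the open dense locus of $G_r(C_n)$ parametrizing genuine spans of $r+1$ distinct independent points of $C_n$; a standard semicontinuity argument in the incidence variety of Grassmannian times $\bbP^n$ then extends the equivalence to the closure, using that the existence of a simultaneous decomposition $f_i=\sum_j c_j^i L_j^n$ is a closed condition on $(f_0,\ldots,f_s)$. This boundary bookkeeping is the only step that is not pure translation through the Veronese dictionary.
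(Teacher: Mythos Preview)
The paper does not actually supply a proof of this proposition: it is introduced as ``this easy result'' and stated without argument. Your proof is correct and is exactly the argument the paper's setup invites---you use the dictionary of Remark~\ref{secantremark} identifying points of $C_n$ with classes $[L^n]$, so that a simultaneous decomposition $f_i=\sum_j c_j^i L_j^n$ is literally the statement that each $p_{f_i}$ lies in the span of the $[L_j^n]$. Your closing remark about the boundary of $G_r(C_n)$ (coinciding or dependent points) is more scrupulous than the paper itself, which is content to read the equivalence on the open locus of honest $(r+1)$-secant spaces; note, however, that the paper does implicitly rely on the degenerate cases later (e.g.\ in the analysis of $\phi$ with multiple roots in Proposition~\ref{propC5}), so your caution is well placed.
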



\section{Rational Curves of degree $n$ in $\bbP^{n-k}$}

Let $C\subset \bbP^{n-k}$, $n-k\geq 2$, be a rational curve of degree $n$ with only ordinary singularities. By Grothendieck-Segre's theorem (see \cite{Groth1}) we can write $N_{C;\bbP^{n-k}}=\bigoplus^{n-k-1}_{i=1}\sO_{\bbP^1}(n+d_i)$. On the other hand we have $c_1(N_{C;\bbP^{n-k}})=(n-k+1)n-2$, so $\sum^{n-k-1}_{i=1}d_i=2n-2$.
Let $\psi:\bbP^1\into \bbP^{n-k}$ be the morphism defined by $\psi_i\in \Gamma(\bbP^1,\sO_{\bbP^1}(n))$. Let $C=\psi(\bbP^1)$, we suppose that it has only ordinary singularities, i.e. the map of differential fibre bundles $\Omega\psi:\psi^*\Omega_{\bbP^{n-k}}\into \Omega_{\bbP^1}$ is surjective.

\begin{definition}
We denote with  $N^n_m(n_1,...,n_{m-1})\subset Gr(\mathbb{P}^{k-1},\mathbb{P}^n)$ the subscheme {param\-etrizing} linear spaces $L\subset \mathbb{P}^n$ such that $\pi_L(C_n)$ has $(n_1,...,n_{m-1})$ has splitting type of their normal bundle; we denote with $T^n_m(t_1,...,t_{m})$ the analogous subscheme for the splitting type of the restricted tangent bundle. 

\end{definition}

%
%

Let $C_n\subset \bbP^n$ be the rational normal curve of degree $n$ and $\nu_n:\bbP^1\into \bbP^n$ be the Veronese map, so $C_n=\nu_n(\bbP^1)$. Let $\pi_L(C_n)$ be the rational curve obtained from $C_n$ by projection from a $(k-1)$-dimensional linear subspace $L\subset \bbP^n$ on $\bbP^{n-k}\subset \bbP^n$. We will suppose that $\pi_L(C_n)$ has only ordinary singularities. 
 Let  $J(\nu_n)$ be the Jacobian matrix of $\nu_n$:
\[
J(\nu_n)=
\left(
\begin{array}{ccccc}
ns^{n-1} & \dots & t^{n-1} & 0 \\
0 & s^{n-1} & \dots & nt^{n-1}
\end{array}
\right).
\]
Since we are mainly interested in studying the splitting of the normal bundle of rational curves, we restrict our attention to the case $k<n-2$.

The Euler's exact sequence on $\bbP^{n-k}$, restricted to $\pi_L(C_n)$ (see \cite{Hartshorne}, \cite{Okonek}) and the usual one for the normal bundle on $\pi_L(C_n)$, give rise to the following diagram, where $\deg \sO_{\pi_L(C_n)}(1)=1$ :

\begin{tikzpicture}
\matrix(m)[matrix of math nodes,
row sep=2em, column sep=1em,
text height=1.5ex, text depth=0.25ex]
{&0&[1.5cm]0& &\\
& \sO_{\pi_L(C_n)}  & \sO_{\pi_L(C_n)} &  &\\
 & \sO_{\pi_L(C_n)}(1)^2  & \sO_{\pi_L(C_n)}(n)^{n-k+1}& &\\
0  & \sO_{\pi_L(C_n)}(2) & T\bbP^{n-k}|_{\pi_L(C_n)}& {N'}_{\pi_L(C_n) ; \bbP^{n-k}}  & 0\\
 & 0 & 0 & & \\
};
\path[->,font=\scriptsize,>=angle 90]
(m-1-2) edge node[auto]  {} (m-2-2)
(m-1-3) edge node[auto]  {} (m-2-3)
(m-2-2) edge node[auto]  {=}(m-2-3)
        edge node[auto]  {} (m-3-2)
(m-2-3) edge node[auto]  {} (m-3-3) 
(m-3-2) edge node[auto]  {$J(\pi_L\circ \nu_n)$} (m-3-3)
        edge node[auto]   {} (m-4-2)
(m-3-3) edge node[auto]   {} (m-4-3)   
(m-4-3) edge node[auto]   {} (m-5-3)     
(m-4-1) edge node[auto] {} (m-4-2)
(m-4-2) edge node[auto]  {} (m-4-3)
(m-4-2) edge node[auto] {} (m-5-2)
(m-4-3) edge node[auto] {} (m-4-4)
(m-4-4) edge node[auto] {} (m-4-5);        
\end{tikzpicture}

where ${N'}_{\pi_L(C_n) ; \bbP^{n-k}}$ is the equisingular normal sheaf (see Def.3.4.5 \cite{sernesi}).
Therefore the following exact sequence holds:

\begin{equation}\label{succ:12}
\begin{tikzpicture}
\matrix(m)[matrix of math nodes,
row sep=0em, column sep=1em,
text height=1.5ex, text depth=0.25ex,baseline=(current bounding box.center)]
{0& \sO_{\pi_L(C_n)}(1)^2  &[1.5cm] \sO_{\pi_L(C_n)}(n)^{n-k+1} & {N'}_{\pi_L(C_n) ;\bbP^{n-k}}  & 0\\
};
\path[->,font=\scriptsize,>=angle 90]    
(m-1-1) edge node[auto] {} (m-1-2)
(m-1-2) edge node[auto]  {$J(\pi_L\circ \nu_n)$} (m-1-3)
(m-1-3) edge node[auto] {} (m-1-4)
(m-1-4) edge node[auto] {} (m-1-5);        
\end{tikzpicture}
\end{equation}

\begin{remark}
We can observe that if $\pi_L(C_n)$ has only ordinary singularities, then the map of differentials is surjective (see \cite{Ghione-Sacchiero}), so ${N'}^{\vee}_{\pi_L(C_n);\bbP^{n-k}}$ is a vector bundle. Since we consider only cases with ordinary singularities, we will indicate with$(N_{\pi_L(C_n);\mathbb{P}^{n-k}})$ the bundle  $(\pi_L\circ \nu_n)^*(N'_{\pi_L(C_n);\mathbb{P}^{n-k}})$, even if it is a bundle over $\mathbb{P}^1$.

\end{remark}
\begin{remark}
We observe that $deg  N_{\pi_{L}(C_n);\bbP^{n-k}}(-n)=2n-2$, equivalently $deg  N_{\pi_{L}(C_n);\bbP^{n-k}}=n^2-(k-1)n-2$. It is a vector bundle of $\rank n-k-1$. 

Moreover by Grothendieck-Segre's theorem (see \cite{Groth1}) $N_{\pi_{L}(C_n);\bbP^{n-k}}$ splits as:
$$
\bigoplus^{n-k-1}_{i=1}\mathcal{O}_{\mathbb{P^1}}(n_i) \mbox{ with } n_i\in \bbZ, \mbox{ such that } \sum^{n-k-1}_{i=1}n_i=n^2-(k-1)n-2,
$$ 
where, without loss of generality, we can take $n_1\leq \dots \leq n_{n-k-1}$.
\end{remark}
\begin{remark}
It's easy to show that:
\[
N_{C_n;\bbP^{n}}=\mathcal{O}_{\mathbb{P^1}}(n+2). 
\]

\end{remark}
Let us denote by:
$$
Syz(J(\nu_n))=
\left(
\begin{array}{ccccccccccc}
t^2 & -2st & s^2 & 0 &0& 0&\dots & \dots & 0\\
0 & t^2 & -2st & s^2 & 0& 0& \dots & \dots & 0 \\
0 & 0 & t^2 & -2st & s^2& 0&  \dots & \dots &0 \\
\vdots & \ddots & \ddots & \ddots &\ddots & \ddots &\ddots & \ddots & \vdots \\
0 & \dots & \dots & \dots &\dots & 0 &t^2& -2st & s^2
\end{array}
\right),
$$

the syzygy of the Jacobian matrix $J(\nu_n)$, where $Syz(J(\nu_n))$ is a $(n-1)\times (n+1)-$matrix.

Moreover we have an exact sequence obtained by the projection $\pi_L$ twisted by $\sO_{\pi_L(C_n)}(-n)$:
\begin{equation}
\xymatrix{
0 \ar[r] & \sO_{\pi_{L}(C_n)}^{k} \ar[r]^P & \sO_{\pi_{L}(C_n)}^{n+1} \ar[r] & \sO_{\pi_{L}(C_n)}^{n-k+1} \ar[r] & 0,}
\end{equation}
where:
$$
P=
\left[
\begin{array}{ccccccccc}
p_1&\cdots&p_k
\end{array}
\right],
$$
where $p_i=(a^i_0,...,a^i_n)\in \bbP^n$, for $i=1,\dots, k$, and $p_1,...,p_k$ is a set of points which generate the subspace $L\subset \bbP^n$ .

Let $V$ be a 2-dimensional complex vector space, and consider the following diagram:
\begin{equation}
\xymatrix{
&&0\ar[d]&\\
& & \sO^{k}_{\pi_{L}(C_n)} \ar[d]^P &   &  \\ 
0 \ar[r] & \sO_{\pi_{L}(C_n)}(-n+1)^2 \ar[r]^{J(\nu_n)} & \sO_{\pi_{L}(C_n)}^{n+1} \ar[d] \ar[r]^{Syz(J(\nu_n))} & \sO_{\pi_{L}(C_n)}(2)^{n-1}  \ar[r] & 0 \\
0 \ar[r] & \sO_{\pi_{L}(C_n)}(-n+1)^2 \ar[r]^{J(\pi_L\circ\nu_n)} & \sO_{\pi_L(C_n)}^{n-k+1} \ar[d] \ar[r] & N_{\pi_L(C_n);\bbP^{n-k}}(-n) \ar[r] & 0\\
&&0&,
}
\end{equation}
via $(\pi_L\circ \nu_n)^*)$, we can complete and write down (3) in a more invariant way on $\mathbb{P}^1$:

\begin{equation}\label{diagr:normCODIMK}
\begin{tikzpicture}
\matrix(m)[matrix of math nodes,
row sep=1.5em, column sep=1em,
text height=1.5ex, text depth=0.25ex]
{&&[1.75em]0&[1.75em]0\\
& & \bbC^{k}\otimes\sO_{\mathbb{P}^1}& \bbC^{k}\otimes\sO_{\mathbb{P}^1}  &  \\ 
0 & V\otimes\sO_{\mathbb{P}^1}(-n+1)  & \sym^nV\otimes\sO_{\mathbb{P}^1}  & \sym^{n-2}V\otimes \sO_{\mathbb{P}^1}(2) & 0 \\
0  & V\otimes\sO_{\mathbb{P}^1}(-n+1)  & \frac{\sym^nV}{\bbC^{k}}\otimes\sO_{\mathbb{P}^1}  & N_{\pi_L(C_n);\bbP^{n-k}}(-n)  & 0\\
&&0&0\\};
\path[->,font=\scriptsize,>=angle 90]
(m-1-3) edge node[auto]  {} (m-2-3)
(m-1-4) edge node[auto]  {} (m-2-4)
(m-2-3) edge node[auto]  {$\isom$}(m-2-4)
        edge node[auto]  {P} (m-3-3)
(m-2-4) edge node[auto]  {$(\mathcal{N}^L_{n,k})^t$} (m-3-4) 
(m-3-1) edge node[auto]  {} (m-3-2)
(m-3-2) edge node[auto]  {$J(\nu_n)$} (m-3-3)
(m-3-3) edge node[auto]  {$Syz(J(\nu_n))$} (m-3-4)
(m-3-4) edge node[auto]  {} (m-3-5)
(m-4-1) edge node[auto] {} (m-4-2)
(m-4-2) edge node[auto]  {$J(\pi_{L}\circ\nu_n)$} (m-4-3)
(m-4-3) edge node[auto] {} (m-4-4)
(m-4-4) edge node[auto] {} (m-4-5)
(m-3-3) edge node[auto] {} (m-4-3)
(m-3-4) edge node[auto] {} (m-4-4)
(m-4-3) edge node[auto] {} (m-5-3)
(m-4-4) edge node[auto] {} (m-5-4);        
\end{tikzpicture}
\end{equation}

where the map $(\mathcal{N}^L_{n,k})^t$ is:
$$
(\mathcal{N}^L_{n,k})^t=
Syz(J(\nu_n))\cdot \left(
\begin{array}{ccccc}
a^1_0 & \dots & a^{k}_0 \\
\vdots & \ddots & \vdots \\
a^1_n & \dots & a^{k}_n
\end{array}
\right)=
$$
$$
\left(
\begin{array}{ccccccc}
a^1_0 t^2-2a^1_1 ts+a^1_2 s^2 & a^1_1 t^2-2a^1_2 ts+a^1_3 s^2 & \dots & a^1_{n-2} t^2-2a^1_{n-1} ts+a^1_n s^2\\
\vdots & \vdots & \ddots & \vdots \\
a^{k}_0 t^2-2a^{k}_1 ts+a^{k}_2 s^2 & a^{k}_1 t^2-2a^{k}_2 ts+a^{k}_3 s^2 & \dots & a^{k}_{n-2} t^2-2a^{k}_{n-1} ts+a^{k}_n s^2
\end{array}
\right)^t.
$$

The last exact column of (\ref{diagr:normCODIMK})
gives us some information about the splitting type of ${N}_{\pi_L(C_n),\bbP^{n-k}}$:

\begin{lem}\label{boundaryconditionsplittingnormalbundlecodimk}
If $\pi_L(C_n)$ has only ordinary singularities, then the splitting type of ${N'}_{\pi_L(C_n),\bbP^{n-k}}$ must be $(n_1,...,n_{n-k-1})$ with $n+2\leq n_1\leq ...\leq n_{n-k-1} \leq n+2+2k$. 
\end{lem}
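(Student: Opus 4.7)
The plan is to exploit the rightmost column of diagram (\ref{diagr:normCODIMK}), which under the ordinary singularity hypothesis is a short exact sequence of vector bundles on $\bbP^1$:
\[
0 \to \sO_{\bbP^1}^{k} \to \sO_{\bbP^1}(2)^{n-1} \to N_{\pi_L(C_n);\bbP^{n-k}}(-n) \to 0.
\]
Writing the Grothendieck--Segre decomposition
$N_{\pi_L(C_n);\bbP^{n-k}}(-n)\cong \bigoplus_{i=1}^{n-k-1}\sO_{\bbP^1}(d_i)$
with $d_1\leq\cdots\leq d_{n-k-1}$ and $d_i=n_i-n$, it is enough to prove that $2\leq d_i\leq 2k+2$ for every $i$.

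For the lower bound $d_i\geq 2$, the strategy is to dualize the above sequence (which stays exact since everything is locally free on $\bbP^1$) and obtain
\[
0 \to N_{\pi_L(C_n);\bbP^{n-k}}(-n)^{\vee} \to \sO_{\bbP^1}(-2)^{n-1} \to \sO_{\bbP^1}^{k} \to 0.
\]
Each line-bundle summand $\sO_{\bbP^1}(-d_i)$ of the leftmost term then embeds as a sub-bundle of $\sO_{\bbP^1}(-2)^{n-1}$. Such an embedding corresponds to an element of
$\Hom(\sO_{\bbP^1}(-d_i),\sO_{\bbP^1}(-2)^{n-1})=H^0(\sO_{\bbP^1}(d_i-2))^{n-1}$
with no common zero, which forces $d_i\geq 2$ and hence $n_i\geq n+2$.

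For the upper bound, I would use the degree count recorded just before the statement: $c_1(N_{\pi_L(C_n);\bbP^{n-k}})=(n-k+1)n-2$, which twisted by $-n$ on a rank $n-k-1$ bundle gives $\sum_i d_i=2n-2$. Combined with $d_i\geq 2$ applied to the remaining $n-k-2$ summands, this yields
\[
d_j=(2n-2)-\sum_{i\neq j}d_i\leq (2n-2)-2(n-k-2)=2k+2,
\]
so that $n_j=n+d_j\leq n+2+2k$.

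The only non-formal ingredient is the very first step: the ordinary singularity hypothesis is what makes the equisingular normal sheaf ${N'}_{\pi_L(C_n);\bbP^{n-k}}$ a bundle, so that (after pullback to $\bbP^1$) the column in diagram (\ref{diagr:normCODIMK}) really is a short exact sequence of vector bundles and dualization preserves exactness and sub-bundle structure. Once this is granted, both bounds reduce to elementary numerical considerations on $\bbP^1$.
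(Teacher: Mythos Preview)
Your proof is correct and follows exactly the route the paper indicates: the lemma is stated immediately after the sentence ``The last exact column of (\ref{diagr:normCODIMK}) gives us some information about the splitting type\ldots'' and no further argument is given there, so you have simply supplied the details the paper leaves implicit. Your use of that column, dualization to get the lower bound $d_i\geq 2$, and the degree count $\sum d_i=2n-2$ for the upper bound are precisely what is intended.
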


If we consider the exact cohomology sequence which is the dual of the last exact column of (\ref{diagr:normCODIMK}), we get from a more invariant point of view:
\[
H^0(N^{\vee}_{\pi_L(C_n);\bbP^{n-k}}(n+2))\isom \ker\{\sym^{n-2}V^* \xrightarrow{N^L_{n,k}} \bbC^{k}\otimes \sym^{2}V  \}
\] 
while $H^1(N^{\vee}_{\pi_L(C_n);\bbP^{n-k}}(2n-2))$ is the cokernel of $N^L_{n,k}$ which is the $3k\times (n-1)$ matrix:
$$
N^L_{n,k}=
\left(
\begin{array}{ccccc}
a^1_0 & \dots & a^1_{n-2}\\
-2a^1_1 & \dots & -2a^1_{n-1}\\
a^1_2 & \dots & a^1_{n}\\
\vdots & \ddots & \vdots \\
a^{k}_0 & \dots & a^{k}_{n-2}\\
-2a^{k}_1 & \dots & -2a^{k}_{n-1}\\
a^{k}_2 & \dots & a^{k}_n
\end{array}
\right).
$$
\begin{remark}\label{boundaryconditionsplittingnormalbundle2codimk}
\begin{itemize}
\item[i.] We have that $\deg(N^{\vee}_{\pi_L(C_n);\bbP^{n-k}}(n+2))= -2k$ and 
$$
h^0(N^{\vee}_{\pi_L(C_n);\bbP^{n-k}}(n+2))= n-1 - \rank(N^L_{n,k})=\dim \ker (N^L_{n,k}).
$$ 
Therefore we have:  
$$
2\leq \rank(N^L_{n,k})\leq \min\{n-1,3k\},
$$ 
so 
$$
0\leq h^0(N^{\vee}_{\pi_L(C_n);\bbP^{n-k}}(n+2))\leq n-3,
$$
but as $\rank(N^{\vee}_{\pi_L(C_n);\bbP^{n-k}}(n+2))=n-k-1$ we have that, by Grothendieck-Segre's theorem (see \cite{Groth1}): 
$$
N^{\vee}_{\pi_{L}(C_n);\bbP^3}(n+2) \mbox{ splits in } \bigoplus^{n-k-1}_{i=1}\sO_{\bbP^1}({n'}_i),
$$ 
with 
$$
-2k\leq {n'}_1\leq ... \leq {n'}_{n-k-1}\leq 0 \mbox{ and } {n'}_1+...+{n'}_{n-k-1}=-2k.
$$

\item[ii.] We have that:
$$
\rank(N^L_{n,k})=\min\{n-1,3k\} \mbox{ if and only if } h^0(N^{\dual}_{\pi_{L}(C_n);\bbP^{n-k}}(n+2))=0,
$$
which is equivalent to 
$$
N^{\dual}_{\pi_{L}(C_n);\bbP^{n-k}}(n+2)\isom \sO_{\bbP^1}({n'}_0)\oplus ... \oplus \sO_{\bbP^1}({n'}_{n-k-2}),  
$$
with all  ${n'}_i\neq 0$.

Moreover:
$$
\rank(N^L_{n,k})< \min\{n-1,3k\}
$$
$$
 \mbox{ if and only if } 
$$
$$ 
h^0(N^{\dual}_{\pi_{L}(C_n);\bbP^{n-k}}(n+2))=n-1-\rank(N^L_{n,k}), 
$$
which, when $k\leq r$, is equivalent to: 
$$
N^{\dual}_{\pi_{L}(C_n);\bbP^{n-k}}(n+2)\isom\sO^{n-1-\rank(N^L_{n,k})}_{\bbP^1}\oplus \mathcal{F}^{\vee}(n+2),
$$ 
where $\rank(\mathcal{F})=\rank(N^L_{n,k})-k$  and $\deg(\mathcal{F}^{\vee}(n+2))=-2k$ or, equivalently, $\deg(\mathcal{F})=(r-k)(n+2)+2k$.

But we have also:
$$
\rank(N^L_{n,k})\geq k+1,
$$
since, otherwise, some ${n'}_i$ should be $\geq 1$, so that some $n_i$ should be $\leq n+1$, but this is impossible by Lemma \ref{boundaryconditionsplittingnormalbundlecodimk}.  
\end{itemize}
\end{remark}

By the above considerations it follows that:

\begin{propo}\label{prop:splittingnormal}
If $\pi_L(C_n)$ has only ordinary singularities, then 
$$
N_{\pi_{L}(C_n);\bbP^{n-k}}\isom \sO_{\bbP^1}(n+2)^{n-1-\rank(N^L_{n,k})}\oplus\mathcal{F},
$$ 
where $\deg(\mathcal{F}^{\vee}(n+2))=-2k$ and $\mathcal{F}\isom \bigoplus^{\rank(N^L_{n,k})-k}_i \sO_{\bbP^1}(l_i)$, with $l_i\geq n+3$.
\end{propo}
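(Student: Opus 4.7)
The proof of Proposition \ref{prop:splittingnormal} is essentially a bookkeeping consolidation of Lemma \ref{boundaryconditionsplittingnormalbundlecodimk} with the cohomological computation in Remark \ref{boundaryconditionsplittingnormalbundle2codimk}; the plan is to isolate the $\sO_{\bbP^1}(n+2)$ summands using $h^0$ of the twisted dual, and then read off the rank and degree of the residual piece $\mathcal{F}$ from the numerical data already at hand.

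The first step is to invoke Grothendieck--Segre to write $N_{\pi_L(C_n);\bbP^{n-k}}\isom\bigoplus_{i=1}^{n-k-1}\sO_{\bbP^1}(n_i)$ and then twist by $\sO_{\bbP^1}(-(n+2))$, obtaining $N^{\vee}_{\pi_L(C_n);\bbP^{n-k}}(n+2)\isom\bigoplus_{i=1}^{n-k-1}\sO_{\bbP^1}(n+2-n_i)$. Because $\pi_L(C_n)$ has only ordinary singularities, Lemma \ref{boundaryconditionsplittingnormalbundlecodimk} forces $n+2-n_i\leq 0$ for every $i$, so the only line-bundle summands contributing to $H^0$ are the trivial ones. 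Consequently the multiplicity of $\sO_{\bbP^1}$ in $N^{\vee}(n+2)$ equals $h^0(N^{\vee}_{\pi_L(C_n);\bbP^{n-k}}(n+2))$, which by Remark \ref{boundaryconditionsplittingnormalbundle2codimk}(i) is exactly $n-1-\rank(N^L_{n,k})$. Twisting back, this gives the decomposition
\[
N_{\pi_L(C_n);\bbP^{n-k}}\isom \sO_{\bbP^1}(n+2)^{n-1-\rank(N^L_{n,k})}\oplus \mathcal{F},
\]
where $\mathcal{F}$ collects the summands $\sO_{\bbP^1}(n_i)$ with $n_i>n+2$, i.e.\ $n_i\geq n+3$.

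Finally, the rank of $\mathcal{F}$ is determined by subtraction, $\rank(\mathcal{F})=(n-k-1)-(n-1-\rank(N^L_{n,k}))=\rank(N^L_{n,k})-k$, and the degree of $\mathcal{F}^{\vee}(n+2)$ is computed from $\deg N_{\pi_L(C_n);\bbP^{n-k}}=n^2-(k-1)n-2$: the total degree of $N^{\vee}(n+2)$ is $-2k$, while the $\sO_{\bbP^1}(n+2)^{n-1-\rank(N^L_{n,k})}$ block contributes zero to that degree, so $\deg(\mathcal{F}^{\vee}(n+2))=-2k$ as claimed. There is no genuine obstacle here: the content is entirely contained in Lemma \ref{boundaryconditionsplittingnormalbundlecodimk} (which provides the lower bound $n_i\geq n+2$, ensuring the $H^0$ count is honest) and in the identification of $H^0(N^{\vee}(n+2))$ with $\ker(N^L_{n,k})$ established via the last column of diagram (\ref{diagr:normCODIMK}); the only point to verify carefully is that $\rank(N^L_{n,k})\geq k$, so that the multiplicity $\rank(N^L_{n,k})-k$ is non-negative, which is part of Remark \ref{boundaryconditionsplittingnormalbundle2codimk}(ii).
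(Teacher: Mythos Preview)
Your argument is correct and is precisely the consolidation the paper intends: the proposition is stated immediately after Lemma \ref{boundaryconditionsplittingnormalbundlecodimk} and Remark \ref{boundaryconditionsplittingnormalbundle2codimk} with the phrase ``By the above considerations it follows that'', so no separate proof is given beyond the very steps you spell out. Your write-up is in fact more explicit than the paper's, but the route is identical.
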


As in the case of normal bundle we obtain an exact sequence for restricted tangent bundle; let $T\mathbb{P}^{n-k}\vert_{\pi_L(C_n)}$ denote $(\pi_L\circ \nu_n)^*(T\mathbb{P}^{n-k}\vert _{\pi_L(C_n)})$, then:
\begin{equation}\label{succ:T}
\begin{tikzpicture}
\matrix(m)[matrix of math nodes,
row sep=0em, column sep=1em,
text height=1.5ex, text depth=0.25ex,baseline=(current bounding box.center)]
{0  & (T\bbP^{n-k}|_{\pi_L(C_n)})^{\vee}(n+1)  & \sym^{n-1}V\otimes\sO_{\bbP^1}^n & \bbC^k\otimes\sO_{\bbP^1}(1) & 0,\\
};
\path[->,font=\scriptsize,>=angle 90]    
(m-1-1) edge node[auto] {} (m-1-2)
(m-1-2) edge node[auto]  {} (m-1-3)
(m-1-3) edge node[auto] {$\mathcal{T}^L_{n,k}$} (m-1-4)
(m-1-4) edge node[auto] {} (m-1-5);        
\end{tikzpicture}
\end{equation}

where we have indicated with $T^L_{n,k}$ the $2k\times n$ matrix:
$$
T^L_{n,k}=
\left(
\begin{array}{ccccc}
a^1_0 & \dots & a^1_{n-1}\\
-a^1_1 & \dots & -a^1_{n}\\
\vdots & \ddots & \vdots \\
a^{k}_0 & \dots & a^{k}_{n-1}\\
 -a^{k}_1 & \dots & -a^{k}_n
\end{array}
\right).
$$

So we have that $\deg((T\bbP^{n-k}|_{\pi_L(C_n)})^{\vee}(n+1))=-k$ and 
$$
h^0((T\bbP^{n-k}|_{\pi_L(C_n)})^{\vee}(n+1))= n - \rank(T^L_{n,k})=\dim \ker (T^L_{n,k}).
$$ 
As $\rank((T\bbP^{n-k}|_{\pi_L(C_n)})^{\vee}(n+1))=n-k$, we have that $(T\bbP^{n-k}|_{\pi_L(C_n)})^{\vee}(n+1)$ splits in to $\sO_{\bbP^1}({t'}_1)\oplus ... \oplus \sO_{\bbP^1}({t'}_{n-k})$, by Grothendieck-Segre's theorem (see \cite{Groth1}) with ${t'}_1\leq ...\leq {t'}_{n-k}\leq0$ and ${t'}_1+..+{t'}_{n-k}=-k$. 

Therefore we have that:
$$
\rank T^L_{n,k}\geq k+1,
$$
otherwise some ${t'}_i$ must be $\geq 1$ which is impossible.

We recall that:
\[
T\bbP^{n}|_{C_n}\simeq \sO^{n}_{\bbP^1}(n+1).
\]

By the considerations above it follows that:
\begin{propo}\label{splittingrank}
$T\bbP^{n-k}|_{\pi_L(C_n)}\isom \sO_{\bbP^1}(n+1)^{n-\rank(T^L_{n,k})}\oplus\mathcal{F}$, where $\deg(\mathcal{F}^{\vee}(n+1))=-k$ and $\mathcal{F}\isom \bigoplus^{\rank(T^L_{n,k})-k}_{i=1}\sO_{\pi_L(C_n)}(l_i)$, with $l_i\geq n+2$.
\end{propo}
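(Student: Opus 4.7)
The plan is to mirror the argument already sketched for the normal bundle in Proposition \ref{prop:splittingnormal}, using Grothendieck--Segre applied to the twisted dual bundle $\mathcal{E} := (T\bbP^{n-k}|_{\pi_L(C_n)})^{\vee}(n+1)$, and then reading off the statement from the short exact sequence (\ref{succ:T}). By Grothendieck--Segre we can write $\mathcal{E} \isom \bigoplus_{i=1}^{n-k}\sO_{\bbP^1}(t'_i)$ with $t'_1\le \cdots \le t'_{n-k}$, and by what has already been computed we know $\sum t'_i = -k$ and that $\rank T^L_{n,k} \ge k+1$.

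The first key step is the degree bound $t'_i \le 0$ for every $i$. This is where (\ref{succ:T}) is used: it exhibits $\mathcal{E}$ as a subsheaf of the trivial bundle $\sym^{n-1}V\otimes \sO_{\bbP^1}$, so each summand $\sO_{\bbP^1}(t'_i)$ admits a nonzero map to a trivial bundle on $\bbP^1$, which forces $t'_i \le 0$. The second step is to identify the number of zero summands: since $h^0(\sO_{\bbP^1}(t'_i)) = 1$ when $t'_i=0$ and $0$ otherwise, the count $\#\{i : t'_i=0\}$ equals $h^0(\mathcal{E}) = n-\rank(T^L_{n,k})$, where the last equality comes from taking global sections in (\ref{succ:T}) and identifying $H^0$ of $\mathcal{E}$ with $\ker T^L_{n,k}$.

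The third step is the bookkeeping. Dualizing and twisting by $\sO_{\bbP^1}(n+1)$ turns each $\sO_{\bbP^1}(t'_i)$ into $\sO_{\bbP^1}(n+1-t'_i)$, so the summands with $t'_i=0$ collectively contribute $\sO_{\bbP^1}(n+1)^{\,n-\rank(T^L_{n,k})}$, while the remaining $\rank(T^L_{n,k})-k$ summands (of which there are that many because the total rank is $n-k$) have exponents $l_i = n+1-t'_i \ge n+2$ since $t'_i \le -1$. Grouping these into $\mathcal{F} := \bigoplus_{t'_i<0}\sO_{\bbP^1}(l_i)$, the degree of $\mathcal{F}^{\vee}(n+1) = \bigoplus_{t'_i<0}\sO_{\bbP^1}(t'_i)$ equals $\sum_{t'_i<0} t'_i = \sum_i t'_i = -k$, since the vanishing terms contribute nothing. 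This yields the asserted decomposition.

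There is no real obstacle here: every ingredient has been assembled in the paragraphs immediately preceding the proposition. The only point requiring explicit verification is the injectivity of $\mathcal{E} \hookrightarrow \sym^{n-1}V\otimes \sO_{\bbP^1}$, but this is the leftmost arrow of the exact sequence (\ref{succ:T}); once this is noted, the non-positivity of the $t'_i$ is automatic and the rest is linear algebra on rank and degree.
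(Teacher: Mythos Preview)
Your proof is correct and follows exactly the approach the paper intends: the paper does not spell out a separate proof but writes ``By the considerations above it follows that,'' referring to the very ingredients you use --- the exact sequence (\ref{succ:T}), the identification $h^0(\mathcal{E})=n-\rank(T^L_{n,k})=\dim\ker T^L_{n,k}$, the Grothendieck--Segre splitting with $t'_i\le 0$ and $\sum t'_i=-k$, and the bound $\rank T^L_{n,k}\ge k+1$. Your write-up simply makes these steps explicit, and the bookkeeping matches.
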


\begin{remark}
The rank of $N^L_{n,k}$ (resp. $T^L_{n,k}$) does not depends on $L$ and not on the choice of the points.
Let $F_{p_i}$ be the binary form of degree $n$ which corresponds to the point $p_i\in\bbP^n$. If we indicate with $Cat_{F_{p_i}}(2,n-2)$ or $Cat_{p_i}(2,n-2)$ the Hankel $3\times(n-1)$ matrix of $F_{p_i}$ we have:
$$
\rank N^L_{n,k}=
\rank \left(
\begin{array}{c}
Cat_{F_{p_1}}(2,n-2) \\ 
\vdots \\
Cat_{F_{p_{k}}}(2,n-2)
\end{array}
\right),
$$
resp.
$$
\rank T^L_{n,k}=
\rank \left(
\begin{array}{c}
Cat_{F_{p_1}}(1,n-1) \\ 
\vdots \\
Cat_{F_{p_{k}}}(1,n-1)
\end{array}
\right),
$$
\end{remark}

\begin{remark}
It's clear from the above consideration that the same value of $\rank(N^L_{n,k})$ (resp. $T^L_{n,k}$) may correspond to different splitting types of ${N}_{\pi_L(C_n);\bbP^{n-k}}$ (resp. $T\bbP^{n-k}|_{\pi_L(C_n)}$).
\end{remark}

\section{Main Theorems}

When considering the matrix $N^L_{n,k}$, we can notice that there are two different cases:
\begin{itemize}
\item[i)] $3k\geq n-1$, so $\frac{n-1}{3}\leq k<n-3$.
\item[ii)] $3k<n-1$, so $k<\frac{n-1}{3}$.
\end{itemize}
\begin{definition}
For all $n\in \bbN$, let us define $\rho^{n,k}_r\in\bbN$ as follows:
$$
\rho^{n,k}_r:=\left\lbrace
\begin{array}{cl}
r & \mbox{ , for } 3k\geq n-1 \mbox{ and } 1\leq r<n-k;\\
n-3k+r-1 &\mbox{ , for } 3k< n-1 \mbox{ and }  r \leq 2k-1.
\end{array}
\right.
$$

\end{definition}

\begin{remark}
In the above definition, the conditions on $r$ come from 
$$
\rank(N^L_{n,k})\geq k+1.
$$
\end{remark}

\begin{lem}
Let $\rho^{n,k}_r\leq (n-k+1)/3$. 
%
Then $\rank N^L_{n,k}=n-1-\rho^{n,k}_r$ if and only if the centre of projection $L\isom\bbP^{k-1}$ is contained in the base locus of a linear system $\Phi$ generated by $\rho^{n,k}_r$ linearly independent $\bbP^{n-3}$'s which intersect $C_n$ in degree $n-2$.  
\end{lem}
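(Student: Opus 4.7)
My plan is to reinterpret $\ker N^L_{n,k}$ via apolarity and identify it with the space of $\bbP^{n-3}$'s through $L$ meeting $C_n$ in degree $n-2$. First I would unwind the definition: the matrix $N^L_{n,k}$ represents the linear map
\[
\sym^{n-2}V^*\longrightarrow \bigoplus_{i=1}^{k}\sym^{2}V,\qquad \phi\longmapsto\bigl(\phi\circ F_{p_1},\ldots,\phi\circ F_{p_k}\bigr),
\]
where $F_{p_i}\in S_n$ is the degree $n$ binary form corresponding to $p_i\in\bbP^n$ and $L=\langle p_1,\ldots,p_k\rangle$. Hence $\ker N^L_{n,k}\subset T_{n-2}$ consists of the forms $\phi$ apolar to every $F_{p_i}$, and by rank-nullity the equality $\rank N^L_{n,k}=n-1-\rho^{n,k}_r$ is equivalent to $\dim\ker N^L_{n,k}=\rho^{n,k}_r$. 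So it suffices to identify $\ker N^L_{n,k}$ with the linear system in the statement.

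The core step is the apolarity correspondence. To each non-zero $\phi\in T_{n-2}$ I associate the contraction $C_\phi\colon S_n\to S_2$, $F\mapsto\phi\circ F$. Using the identity $\phi\circ L_p^{n}=\tfrac{n!}{2}\phi(p)L_p^{2}$, the image of $C_\phi$ contains $L_p^{2}$ for every $p\in\bbP^1$ with $\phi(p)\neq 0$, and these span $S_2$; hence $C_\phi$ is surjective, $\ker C_\phi$ has dimension $n-2$, and $\bbP(\ker C_\phi)\subset\bbP^n$ is a $\bbP^{n-3}$. Moreover $L_p^n\in\ker C_\phi$ iff $\phi(p)=0$, so this $\bbP^{n-3}$ meets $C_n$ precisely in the degree $n-2$ divisor on $\bbP^1$ cut out by $\phi$. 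Conversely, every $\bbP^{n-3}$ meeting $C_n$ in a divisor $D$ of degree $n-2$ is of the form $\bbP(\ker C_\phi)$ for the unique (up to scalar) $\phi\in T_{n-2}$ with zero divisor $D$, since both have dimension $n-3$ and share the $n-2$ points of $D$, which are in linearly general position on $C_n$. Because $\phi\mapsto C_\phi$ is linear, independence in $T_{n-2}$ matches independence of the corresponding $\bbP^{n-3}$'s inside the linear system $\bbP(T_{n-2})$.

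Combining the two steps, $\phi\in\ker N^L_{n,k}$ iff $F_{p_i}\in\ker C_\phi$ for every $i$, iff $L\subset\bbP(\ker C_\phi)$. Hence $\ker N^L_{n,k}$ is the affine cone over the full linear system $\Phi$ of $\bbP^{n-3}$'s containing $L$ and meeting $C_n$ in degree $n-2$, so $\dim\ker N^L_{n,k}=\rho^{n,k}_r$ holds iff $\Phi$ is generated by $\rho^{n,k}_r$ linearly independent such $\bbP^{n-3}$'s, which is the claim. The main obstacle is establishing the apolarity bijection cleanly, notably in handling $\phi$ with repeated roots; this is resolved by the uniqueness of a $\bbP^{n-3}$ containing a given degree $n-2$ divisor on $C_n$. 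The hypothesis $\rho^{n,k}_r\leq(n-k+1)/3$ does not intervene in the bijection itself; it ensures, via Remark \ref{boundaryconditionsplittingnormalbundle2codimk}, that the target rank $n-1-\rho^{n,k}_r$ lies in the admissible range.
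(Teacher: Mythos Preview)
Your argument is correct and follows the same apolarity strategy as the paper: both identify $\ker N^L_{n,k}\subset T_{n-2}$ with the linear system of $(n-2)$-secant $\bbP^{n-3}$'s to $C_n$ containing $L$, and then read off the rank via rank--nullity. The only difference is in execution: the paper realises the bijection by factoring each $\phi\in T_{n-2}$ into linear factors and writing out the resulting simultaneous Waring decompositions of the $f_i$ (tacitly assuming simple roots), whereas your uniform description of the $\bbP^{n-3}$ as $\bbP(\ker C_\phi)$ handles the repeated-root case cleanly and makes the linearity of the correspondence immediate.
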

\begin{proof}
\begin{itemize}
\item[$\Leftarrow$] 
Let $L$ be a $\bbP^{k-1}$ which is contained in a linear system $\Phi=\{\pi_{\lambda}\isom\bbP^{n-3}:\pi_{\lambda}=\lambda_0\pi_0+...+\lambda_{\rho_r-1}\pi_{\rho^{n,k}_r-1},\;\;\forall\lambda=[\lambda_0,...,\lambda_{\rho^{n,k}_r-1}]\in\bbP^{\rho_r^{n,k}-1} \}$, where $\pi_0,...,\pi_{\rho^{n,k}_r-1}$ intersect $C_n$ in degree $n-2$.  Let $Q^i\subset C^0_n$ be the divisor such that $\pi_{i}=<Q_i>$. Then there exist $k$ points $p_1,...,p_{k}$ which generate $L$, such that each $p_i$ belongs to $\pi_{\lambda}$ for all $\lambda\in\bbP^{\rho^{n,k}_r-1}$. Let $Q_{\lambda}\subset C^0_n$ be the divisor such that $\pi_{\lambda}=<Q_{\lambda}>$, with $\mbox{Supp} Q_{\lambda}=<q^{m_1}_{1,\lambda},...,q^{m_s}_{s,\lambda}>$ and $s\leq n-2$, i.e. $Q_{\lambda}=m_1q_{1,\lambda}+...+m_sq_{s,\lambda}$. By Remark \ref{secantremark} the binary forms $f_i$ corresponding to $p_i$ can be decomposed as:
$$
f_i=c^i_{1,\lambda}L^n_{1,\lambda}+...+c^i_{n-2,\lambda}L^n_{n-2,\lambda},
$$
where $L_{j,\lambda}$ is the linear binary form corresponding in the usual way to $q_{j,\lambda}$ for $j=1,...,s$, so $L$ belongs to a $\bbP^{n-3\rho^{n,k}_r}$ (this is possible since  $\rho_r^{n,k}\leq (n-k+1)/3$). 
So by the Apolarity Lemma (see \cite{iarrobinokanev}) for each $\lambda\in\bbP^{\rho^{n,k}_r-1}$ there exists a differential form $\phi_{\lambda}\in T_{n-2}$ such that $\phi_{\lambda}\circ f_i=0$. Moreover there exist $\rho^{n,k}_r$ differential forms $\phi_0,...,\phi_{\rho^{n,k}_r-1}\in T_{n-2}$ and, for each $\lambda\in \bbP^{\rho^{n,k}_r-1}$, we have $\phi_{\lambda}=\lambda_0\phi_0+....+\lambda_{\rho^{n,k}_r-1}\phi_{\rho^{n,k}_r-1}$, in particular $\phi_j\circ f_i=0$ for all $j=0,...,\rho^{n,k}_r-1$ and $i=1,...,k$, so $\rank N^L_{n,k}=n-1-\rho^{n,k}_r$.

\item[$\Rightarrow$] If $\rank N^L_{n,k}=n-1-\rho^{n,k}_r$, then there exist $\rho^{n,k}_r$ binary form $\phi_0,...,\phi_{\rho^{n,k}_r-1}\in T_{n-2}$ such that however we consider the generating points $p_1,...,p_k\in \bbP^n$ of $L$, we have $\phi_{\alpha}\circ f_i=(\alpha_0\phi_0+...+\alpha_{\rho^{n,k}_r-1}\phi_{\rho^{n,k}_r-1})\circ f_i=0$ for all $\alpha=[\alpha_0,...,\alpha_{\rho^{n,k}_r-1}]\in\bbP^{\rho^{n,k}_r-1}$ and $i=1,...,k$, where $f_i\in S_n$ is the binary form corresponding to $p_i$. 
In particular $\phi_j\circ f_i=0$ for all $j=0,...,\rho^{n,k}_r-1$ and $i=1,...,k$, so we consider the primary decomposition of $\phi_{\alpha}=\prod^{n-2}_{l=1} \phi^l_{\alpha}$ and we indicate with $(L_{l,\alpha})^{\perp}=\phi^l_{\alpha}$. Therefore $f_1,...,f_k$ can be decomposed in $\infty^{\rho^{n,k}_r-1}$ different simultaneous ways, i.e.:
$$ 
f_i=c^i_{1,\alpha}L^n_{1,\alpha}+...+c^i_{n-2,\alpha}L^n_{n-2,\alpha},
$$
for all $\alpha=[\alpha_0,...,\alpha_{\rho^{n,k}_r-1}]\in \bbP^{\rho^{n,k}_r-1}$ or, in other words, $L$ is contained in the base locus of a linear system $\Phi=\{\pi_{\lambda}\isom\bbP^{n-2}:\pi_{\lambda}=\lambda_0\pi_0+...+\lambda_{\rho^{n,k}_r-1}\pi_{\rho^{n,k}_r-1},\;\;\forall\lambda=[\lambda_0,...,\lambda_{\rho^{n,k}_r-1}]\in\bbP^{\rho^{n,k}_r-1} \}$ generated by $\rho^{n,k}_r$ linearly independent $\bbP^{n-3}$'s which intersect $C_n$ in degree $n-2$. 
So $L$ lies in a $\bbP^{n-3\rho^{n,k}_r}$ ( this is possible thanks to the condition $\rho^{n,k}_r\leq (n-k+1)/3$).
\end{itemize}
\end{proof}
By the above Lemma and Prop.\ref{prop:splittingnormal} we obtain:

\begin{teo}\label{mainthmn1}
Let $\rho^{n,k}_r\leq (n-k+1)/3$. The centre of projection $L\isom\bbP^{k-1}$ is contained in the base locus of a linear system $\Phi$ generated by $\rho^{n,k}_r$ linearly independent $\bbP^{n-3}$'s which intersect $C_n$ in degree $n-2$ if and only if $N_{\pi_L(C_n);\bbP^{n-k}}\isom \sO_{\pi_L(C_n)}(n+2)^{\rho^{n,k}_r}\oplus\mathcal{F}$, where:

\begin{itemize}
\item[i)] if $3k\geq n-1$ and $1\leq r \leq 2k-1 $, then $\mathcal{F}\isom\sO_{\pi_L(C_n)}(n+3)^{2k-2r}\oplus\mathcal{F}' $ with $\rank(\mathcal{F}')=r$ and $\deg({\mathcal{F}'}^{\vee}(n+2))=-2k$ ;

\item[ii)] if $3k<n-1$ and $1\leq r<n-k$, then $\mathcal{F}\isom\bigoplus^{\rank(N^L_{n,k})-k}_{i=0}\sO_{\pi_L(C_n)}(l_i)$ with $l_i\geq n+3$ and $\deg(\mathcal{F}^{\vee}(n+2))=-2k$.
\end{itemize}
\end{teo}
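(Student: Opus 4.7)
The plan is to chain the preceding Lemma together with Proposition~\ref{prop:splittingnormal}. The Lemma gives the bi-implication between the geometric condition (``$L$ lies in the base locus of a linear system generated by $\rho^{n,k}_r$ independent $\bbP^{n-3}$'s meeting $C_n$ in degree $n-2$'') and the linear-algebraic condition $\rank(N^L_{n,k}) = n-1-\rho^{n,k}_r$; Proposition~\ref{prop:splittingnormal} in turn translates rank data for $N^L_{n,k}$ into splitting data for $N_{\pi_L(C_n);\bbP^{n-k}}$. Since both inputs are genuine equivalences, chaining them gives both directions at once.

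First I would apply the Lemma to replace the base-locus hypothesis on $L$ by the rank equality $\rank(N^L_{n,k}) = n-1-\rho^{n,k}_r$. Substituting this into Proposition~\ref{prop:splittingnormal} immediately yields
\[
N_{\pi_L(C_n);\bbP^{n-k}} \isom \sO_{\bbP^1}(n+2)^{\rho^{n,k}_r} \oplus \F,
\]
with $\F = \bigoplus \sO_{\bbP^1}(l_i)$, $l_i\geq n+3$, $\rank \F = \rank(N^L_{n,k}) - k$, and $\deg \F^{\vee}(n+2) = -2k$. Case (ii), where $3k < n-1$, is literally this conclusion, so nothing further is needed.

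For case (i), where $3k\geq n-1$ (so $\rho^{n,k}_r = r$), I have to peel off a specific $\sO_{\bbP^1}(n+3)^{2k-2r}$ summand. My approach is to mimic the apolar/catalecticant analysis of diagram~(\ref{diagr:normCODIMK}) one degree higher: replace $N^{\vee}(n+2)$ by $N^{\vee}(n+3)$ and study the analogous map whose cokernel and kernel compute $h^0$ and $h^1$ of this twist. The corank of the resulting ``shifted'' matrix counts precisely the $\sO_{\bbP^1}(n+3)$ summands of $\F$. Combining this count with the degree identity $\deg \F = (n-k-1-r)(n+2) + 2k$ and the bound $l_i\geq n+3$ (and using the regime hypothesis $3k\geq n-1$ to control which terms of the shifted catalecticant are forced to vanish on $L$) should pin down exactly $2k-2r$ such summands, leaving a residual bundle $\F'$ of rank $r$ satisfying $\deg {\F'}^{\vee}(n+2) = -2k$.

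The main obstacle is case (i). A degree-plus-rank bookkeeping gives only inequalities for the number of $\sO_{\bbP^1}(n+3)$ summands, and the exact equality $2k-2r$ has to be extracted from the geometric hypothesis itself. The cleanest route is to show that the $\rho^{n,k}_r$ apolar degree-$(n-2)$ forms $\phi_0,\dots,\phi_{\rho^{n,k}_r-1}$ from the Lemma can be multiplied by linear forms to produce enough independent degree-$(n-3)$ relations to certify the required corank in the twisted matrix; verifying this linear-independence step, under the balancing regime $3k\geq n-1$, is where I expect the real work to lie.
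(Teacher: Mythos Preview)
Your overall strategy is exactly the paper's: the paper's entire ``proof'' of Theorem~\ref{mainthmn1} is the single sentence ``By the above Lemma and Prop.~\ref{prop:splittingnormal} we obtain:'' preceding the statement --- nothing more is written. For case~(ii) this chaining is literally all that is needed, just as you say, so on that part your proposal coincides with the paper.

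Where you diverge is case~(i). You correctly notice that the Lemma plus Proposition~\ref{prop:splittingnormal} only yields $N\cong\sO(n+2)^{\rho^{n,k}_r}\oplus\mathcal{F}$ with $l_i\ge n+3$ and $\deg\mathcal F^\vee(n+2)=-2k$, and that isolating a specific $\sO(n+3)^{2k-2r}$ summand would need a further argument. The paper, however, supplies no such argument; it simply asserts the refined form. So your shifted-catalecticant plan is not a different route to the paper's proof --- it is an attempt to fill a gap the paper leaves open. Before investing effort there, note that the numerics of the stated decomposition in~(i) are already suspect: Proposition~\ref{prop:splittingnormal} gives $\rank\mathcal F=\rank(N^L_{n,k})-k=n-1-r-k$, whereas the claimed splitting $\sO(n+3)^{2k-2r}\oplus\mathcal F'$ with $\rank\mathcal F'=r$ forces $\rank\mathcal F=2k-r$; these agree only when $3k=n-1$, not throughout the range $3k\ge n-1$. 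Similarly $\deg({\mathcal F'}^\vee(n+2))$ should come out to $-2r$, not $-2k$. So the obstacle you flag in case~(i) is real, and in fact the statement as printed appears to carry typos that no proof will repair.
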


\subsection{Main Theorem for Normal Bundle}

By Theorem \ref{mainthmn1}, we can state the main result of this work:
\begin{teo}[Main Theorem for Normal Bundle]\label{mainthmn3}
Let $C=\pi_L(C_n)\subset \bbP^{n-k}$ and $\rho_r^{n,k}\leq (n-k+1)/3$. Then the following conditions are equivalent:
\begin{itemize}
\item[i)] we have 
$$
N_{\pi_{L}(C_n);\bbP^{n-k}}\isom \sO_{\bbP^1}(n+2)^{\rho^{n,k}_r}\oplus\mathcal{F},
$$ 
where $\deg(\mathcal{F}^{\vee}(n+2))=-2k$ and $\mathcal{F}\isom \bigoplus^{n-1-\rho^{n,k}_r-k}_i \sO_{\bbP^1}(l_i)$, with $l_i\geq n+3$;

\item[ii)] the centre of projection $L\isom\bbP^{k-1}$ is contained in the base locus of a linear system $\Phi$ generated by $\rho_r^{n,k}$ linearly independent $\bbP^{n-3}$'s which intersect $C_n$ in degree $n-2$.

\end{itemize}
Moreover if $L$ is general in the  variety of those $\bbP^{k-1}$ which are contained in the base locus of a linear system $\Phi$ generated by $\rho_r^{n,k}$ linearly independent $\bbP^{n-3}$'s which intersect $C_n$ in degree $n-2$, then 

$$
N_{\pi_L(C_n);\bbP^{n-k}}\isom \sO_{\bbP^1}(n+2)^{\rho^{n,k}_r}\oplus \sO_{\bbP^1}(n+2+B)^{A-2k+B\cdot A}\oplus\sO_{\bbP^1}(n+3+B)^{2k-B\cdot A}, 
$$
where $A=n-1-\rho^{n,k}_r-k$ and $B=\lfloor \frac{2k}{A}\rfloor$.
\end{teo}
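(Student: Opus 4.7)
The equivalence of (i) and (ii) is essentially a repackaging of Theorem \ref{mainthmn1}: in either numerical regime ($3k\geq n-1$ or $3k<n-1$), that theorem already translates the projective condition on $L$ into a decomposition $N_{\pi_L(C_n);\bbP^{n-k}}\isom \sO_{\bbP^1}(n+2)^{\rho^{n,k}_r}\oplus\mathcal{F}$ with exactly the stated constraints on $\mathcal{F}$, and conversely. So only the ``Moreover'' claim about the \emph{generic} splitting of $\mathcal{F}_L$ requires real work.

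The plan is to combine irreducibility of the parameter variety with upper semicontinuity of cohomology. Let $V\subset Gr(\bbP^{k-1},\bbP^n)$ be the locus of subspaces $L$ satisfying (ii). The $(n-3)$-planes intersecting $C_n$ in degree $n-2$ are parametrized by the irreducible variety $\bbP^{n-2}\isom \sym^{n-2}\bbP^1$, so a linear system $\Phi$ of $\rho^{n,k}_r$ of them is a point of an irreducible Grassmannian; its base locus is a $\bbP^{n-3\rho^{n,k}_r}$ (the hypothesis $\rho^{n,k}_r\leq(n-k+1)/3$ guarantees this has dimension at least $k-1$), inside which $L$ varies in a further irreducible Grassmannian of $(k-1)$-planes. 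Hence $V$ is irreducible.

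Over $V$ the rank $A=n-k-1-\rho^{n,k}_r$ and total degree $A(n+2)+2k$ of $\mathcal{F}_L$ are constant. Writing $l_i=n+2+m_i$ with $m_i\geq 1$, the constraint $\sum m_i=2k$ over $A$ summands has a unique most-balanced solution: $(2k-BA)$ of the $m_i$ equal $B+1$ and the remaining $A-2k+BA$ equal $B$, where $B=\lfloor 2k/A\rfloor$. This is precisely the splitting claimed in the theorem. By upper semicontinuity of $\dim H^0(\mathcal{F}_L^{\vee}(j))$ in $L$ for each $j$, on a Zariski-dense open subset of $V$ the splitting of $\mathcal{F}_L$ minimizes these $h^0$'s, i.e.\ it realizes the most balanced splitting actually attained in the family.

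The chief obstacle is to verify that the fully balanced splitting \emph{is} attained for some $L\in V$, not merely approached---this is what turns the semicontinuity inequality into the asserted equality. I would produce such an $L$ by specialization: choose $\rho^{n,k}_r$ apolar forms $\phi_0,\dots,\phi_{\rho^{n,k}_r-1}\in T_{n-2}$ whose zero sets on $\bbP^1$ are pairwise disjoint and generic, and pick $L$ generically inside their common base locus. With this choice, the matrix $N^L_{n,k}$, which by the remark after Proposition \ref{splittingrank} is the vertical stack of Hankel catalecticant blocks $\mathrm{Cat}_{p_i}(2,n-2)$, has a minimal free resolution whose degrees realize the balanced splitting of the cokernel bundle. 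Once this single realization is in hand, semicontinuity plus the irreducibility of $V$ transports the splitting to a dense open subset of $V$, completing the proof.
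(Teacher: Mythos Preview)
Your handling of the equivalence (i) $\Leftrightarrow$ (ii) matches the paper exactly: both simply invoke Theorem~\ref{mainthmn1}. The difference lies entirely in the ``Moreover'' clause.

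The paper's proof of the generic splitting is a one-line citation: it appeals to a result of Ramanathan \cite{ramanathan} which, given a family of vector bundles on $\bbP^1$ of fixed rank $A$ and degree $A(n+2)+2k$, identifies the generic splitting as the most balanced one, namely $((n+2+B)^{A-2k+BA},(n+3+B)^{2k-BA})$. That is the whole argument.

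Your route is more self-contained: you establish irreducibility of the parameter variety $V$ (which the paper also does, separately, in Proposition~\ref{codim.Normal}), invoke upper semicontinuity of $h^0(\mathcal{F}_L^\vee(j))$, and then try to exhibit a single $L\in V$ realizing the balanced splitting so that semicontinuity upgrades to equality. This is a perfectly reasonable strategy, and in principle it reproves the relevant piece of Ramanathan's result in this special setting rather than citing it.

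The weakness is exactly where you flag it: the sentence ``the matrix $N^L_{n,k}$\ldots has a minimal free resolution whose degrees realize the balanced splitting'' is asserted, not proved. Choosing the apolar forms $\phi_j$ generically and $L$ generically in their base locus is plausible, but you have not computed the syzygies of the resulting Hankel stack, and this is not automatic. Without that verification your argument is incomplete, whereas the paper sidesteps the issue entirely by outsourcing to \cite{ramanathan}. If you want to keep your approach, you must either carry out that syzygy computation explicitly or cite a result (such as Ramanathan's) that guarantees the balanced splitting occurs in any flat family of bundles of the given rank and degree over an irreducible base.
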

\begin{proof}
The last part of the theorem follows by a result in \cite{ramanathan}, which allows us to calculate the generic splitting of a vector bundle $\mathcal{F}$ as in Theorem \ref{mainthmn1}:
$$
((n+2+B)^{A-2k+B\cdot A},(n+3+B)^{2k-B\cdot A}).
$$
where we have indicated $A=n-1-\rho^{n,k}_r-k$ and $B=\lfloor \frac{2k}{\rank(N^L_{n,k})-k}\rfloor$.
\end{proof}
\begin{propo}\label{codim.Normal}
Let $\rho^{n,k}_r\leq (n-k+1)/3$. 
The set of linear spaces $L\isom\bbP^{k-1}$ which lie in the base locus of a linear system $\Phi$ generated by $\rho^{n,k}_r$ linearly independent $\bbP^{n-3}$'s which intersect $C_n$ in $n-2$ distinct points, is an irreducible variety of codimension $\rho^{n,k}_r(3k-n+1+\rho^{n,k}_r)$ in $Gr(\bbP^{k-1},\bbP^n)$.

%

\end{propo}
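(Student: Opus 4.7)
The plan is to realize the variety $W$ of the statement as the image of an incidence-variety projection, compute its dimension via the other projection, and match the resulting codimension against the expected codimension of a degeneracy locus supplied by the previous Lemma. First, I parametrize the admissible linear systems: an $(n-3)$-plane $(n-2)$-secant to $C_n$ corresponds via apolarity to the class of a polynomial $\phi\in T_{n-2}\cong\bbC^{n-1}$ of degree exactly $n-2$ with distinct roots, and such $\phi$'s form an open dense subset of $\bbP(T_{n-2})$. Writing $\rho:=\rho^{n,k}_r$, an admissible linear system is therefore a $\rho$-dimensional subspace of $T_{n-2}$, and such systems form an open dense $U\subseteq Gr(\rho,n-1)$ of dimension $\rho(n-1-\rho)$. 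For $\Phi\in U$ generic the $\rho$ generators intersect transversally, so $B(\Phi):=\bigcap_{\phi\in\Phi}V(\phi)\cong\bbP^{n-3\rho}$; the hypothesis $\rho\leq (n-k+1)/3$ guarantees $n-3\rho\geq k-1$, so $\bbP^{k-1}$'s fit inside $B(\Phi)$.

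Next, I introduce the incidence variety
$$I:=\{(\Phi,L)\in U\times Gr(\bbP^{k-1},\bbP^n):L\subset B(\Phi)\},$$
which, over a nonempty open subset of $U$, is a Grassmann bundle with fibre $Gr(\bbP^{k-1},\bbP^{n-3\rho})$; hence $I$ is irreducible of dimension
$$\dim I=\rho(n-1-\rho)+k(n-3\rho-k+1)=k(n-k+1)-\rho(3k-n+1+\rho).$$
The variety $W$ of the statement is the image of the second projection $I\to Gr(\bbP^{k-1},\bbP^n)$; hence $W$ is irreducible, and $\dim W\leq\dim I$ gives $\codim_{Gr}W\geq\rho(3k-n+1+\rho)$.

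For the reverse inequality I invoke the preceding Lemma, which identifies $W$ set-theoretically with $\{L:\rank N^L_{n,k}\leq n-1-\rho\}$. The matrix $N^L_{n,k}$ encodes a morphism of vector bundles on $Gr(\bbP^{k-1},\bbP^n)$, namely $\underline{T_{n-2}}\to \mathcal{U}^\vee\otimes\underline{\sym^2 V}$, where $\mathcal{U}$ is the tautological rank-$k$ bundle whose fibre at $L$ is the $k$-dimensional subspace of $\sym^n V$ spanning $L$; thus $W$ is the rank-$(n-1-\rho)$ degeneracy locus of this map. By the classical bound for degeneracy loci of maps between vector bundles, its codimension is at most the expected value $(3k-(n-1-\rho))((n-1)-(n-1-\rho))=\rho(3k-n+1+\rho)$. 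Combined with the incidence bound this forces equality, and irreducibility has already been established.

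\emph{Main obstacle.} The delicate point is the reverse inequality. The classical codimension bound for degeneracy loci is only an inequality in general, attaining equality when the bundle map is suitably generic; here the incidence-variety argument supplies the matching lower bound, so the two together pin down $\codim_{Gr}W$ without any additional transversality check. As an alternative route, one can instead prove directly that the second projection $I\to W$ is generically finite by showing that for generic $(\Phi,L)\in I$ one has $\ker(N^L_{n,k})=\Phi$: this amounts to bounding the dimension of the locus of $L\subset B(\Phi)$ contained in some extra $V(\phi')$ with $\phi'\notin\Phi$, where in case (i) of the definition of $\rho^{n,k}_r$ (when $3k\geq n-1$) a direct count over the $\phi'$ parameter space suffices, whereas case (ii) demands a finer analysis exploiting the secant structure of $B(\Phi)$.
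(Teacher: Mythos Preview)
Your argument is correct and starts from the same incidence variety as the paper: both parametrize the admissible linear systems $\Phi$ by (an open set of) $Gr(\rho,n-1)$, observe that a generic base locus $B(\Phi)$ is a $\bbP^{n-3\rho}$ thanks to the hypothesis $\rho\le(n-k+1)/3$, and compute $\dim I=\rho(n-1-\rho)+k(n-3\rho-k+1)$ via the projection to the $\Phi$-factor. The divergence is in how you show that the other projection $I\to Gr(\bbP^{k-1},\bbP^n)$ does not drop dimension. The paper appeals to the non-defectivity theorem of Chiantini and Ciliberto for Grassmannians of secant varieties of curves (Theorem~\ref{chiantiniciliberto} together with Proposition~\ref{dec.simultanee}) to assert directly that the computed number is the actual dimension of the image. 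You instead recast $W$ as the rank-$(n-1-\rho)$ degeneracy locus of the global bundle map $\underline{T_{n-2}}\to\mathcal U^{\vee}\otimes\underline{\sym^{2}V}$ on the Grassmannian and invoke the Thom--Porteous upper bound $\codim W\le\rho(3k-n+1+\rho)$, matching the incidence lower bound. Your route is more self-contained, trading the external secant-variety result for a standard determinantal estimate; the only point that deserves one extra sentence is the verification that the closure $\overline W$ genuinely equals the full degeneracy locus (so that the Porteous bound applies to $W$ itself and not merely to some possibly larger component containing it), which follows because any $L$ with $\dim\ker N^L_{n,k}\ge\rho$ lies in $B(\Phi)$ for any $\rho$-dimensional $\Phi\subset\ker N^L_{n,k}$. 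Your final ``alternative route'' via generic finiteness is essentially what the paper's citation of Chiantini--Ciliberto is meant to deliver.
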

\begin{proof}
We can observe that the linear systems $\Phi$, of affine dimension $\rho^{n,k}_r$, made of $(n-2)-$secant $\bbP^{n-3}$'s to the rational normal curve $C_n$ in $\bbP^n$ correspond to the linear systems, of dimension $\rho^{n,k}_r$, made of binary forms of degree $n-2$; therefore the set of these linear systems corresponds to $Gr(\bbP^{\rho^{n,k}_r-1},\bbP^{n-2})$ which is irreducible with $\dim Gr(\bbP^{\rho^{n,k}_r-1},\bbP^{n-2})=\rho^{n,k}_r(n-1-\rho^{n,k}_r)$. We can compute the codimension of the variety of every $\bbP^{k-1}$ which belongs to some $(n-2)-$secant $\bbP^{n-3}$ via an incidence variety:
$$
I_S=\{(L,\pi):L\in Gr(\bbP^{k-1},\bbP^n),\pi\in S, L\subset S\},
$$ 
where $S$ is the set of all possible base loci of a linear system $\Phi$ generated by $\rho^{n,k}_r$ linearly independent $\bbP^{n-3}$'s which intersect $C_n$ in degree $n-2$. In the usual way we can compute the codimension of the image of this incidence variety in $Gr(\bbP^{k-1},\bbP^n)$. We will indicate with $\phi_1$ and $\phi_2$ the natural projections:
$$ 
\xymatrix{
& I_S \ar[dl]_{\phi_1} \ar[dr]^{\phi_2}&\\
Gr(\bbP^{k-1},\bbP^n) & & S,
}
$$
so the codimension in $Gr(\bbP^{k-1},\bbP^n)$ of $\phi_1(I_S)$ is equal to $\dim Gr(\bbP^{k-1},\bbP^n)-\dim S-\dim \phi^{-1}_2(S)$. 

Each projection linear space $L$ is contained in a $\bbP^{n-3\rho^{n,k}_r}$, so the dimension of the fibre is $k(n-3\rho^{n,k}_r+1-k)$, since $\rho^{n,k}_r\leq (n-k+1)/3$. Therefore the variety of linear spaces $L$ which lie in the base locus of a linear system $\Phi$ is an irreducible variety of codimension $\rho^{n,k}_r(3k-n+1+\rho^{n,k}_r)$ in $Gr(\bbP^{k-1},\bbP^n)$. Notice that the above calculation gives the actual dimension thanks to the result of Chiantini and Ciliberto \cite{chiantiniciliberto} on the non-defectivity of the Grassmannians of secant varieties of curves (see Theorem \ref{chiantiniciliberto} and Prop.\ref{dec.simultanee}).
\end{proof}
By Theorem \ref{mainthmn1} and the above Prop. \ref{codim.Normal} we can obtain the following result:
\begin{propo} 
Let $\rho^{n,k}_r\leq (n-k+1)/3$,$A=n-1-\rho^{n,k}_r-k$ and $B=\lfloor \frac{2k}{A}\rfloor$, then   

$N^n_{n-k}((n+2)^{\rho^{n,k}_r},(n+2+B)^{A-2k+B\cdot A},(n+3+B)^{2k-B\cdot A})$ is an open dense set of the irreducible variety of linear spaces $L\isom\bbP^{k-1}$ which lie in the base locus of a linear system $\Phi$ generated by $\rho^{n,k}_r$ linearly independent $\bbP^{n-3}$'s which intersect $C_n$ in $n-2$ distinct points in $Gr(\bbP^{k-1},\bbP^n)$.

\end{propo}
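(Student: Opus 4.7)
The plan is to derive this proposition as a direct consequence of Proposition \ref{codim.Normal}, Theorem \ref{mainthmn1}, and Theorem \ref{mainthmn3}, combined with the standard semicontinuity of splitting types in a flat family of vector bundles on $\bbP^1$.

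First, Proposition \ref{codim.Normal} identifies the target variety $V \subset Gr(\bbP^{k-1},\bbP^n)$ of linear spaces $L \isom \bbP^{k-1}$ contained in the base locus of a linear system $\Phi$ generated by $\rho^{n,k}_r$ linearly independent $\bbP^{n-3}$'s which intersect $C_n$ in degree $n-2$, and establishes that $V$ is irreducible. Theorem \ref{mainthmn1} then guarantees that every $L \in V$ satisfies
$$N_{\pi_L(C_n);\bbP^{n-k}} \isom \sO_{\bbP^1}(n+2)^{\rho^{n,k}_r} \oplus \mathcal{F}_L,$$
where $\mathcal{F}_L$ is a vector bundle of rank $A = n-1-\rho^{n,k}_r-k$ on $\bbP^1$, all of whose summands have degree at least $n+3$, and with fixed total degree determined by $\deg(\mathcal{F}_L^\vee(n+2)) = -2k$. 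Thus each $L \in V$ lies in one of the finitely many strata of $V$ cut out by the splitting type of $\mathcal{F}_L$.

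Second, I would view the family $\{\mathcal{F}_L\}_{L \in V}$ as coming from a coherent sheaf on $\bbP^1 \times V$ that is flat over $V$. A standard upper semicontinuity argument on the dimension of $H^0(\bbP^1, \mathcal{F}_L(-d))$ for various twists $d$ shows that, for any fixed splitting type $\underline{a}$, the locus of $L \in V$ where the splitting type dominates $\underline{a}$ in the usual partial order is Zariski closed. Consequently the locus $U \subset V$ on which $\mathcal{F}_L$ attains the most balanced admissible splitting type (in the sense of \emph{generic splitting} on $\bbP^1$) is Zariski open in $V$.

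Third, Theorem \ref{mainthmn3} (via the result of Ramanathan recalled in its proof) computes this generic splitting explicitly from the constraints $\rank \mathcal{F}_L = A$ and $\deg \mathcal{F}_L = A(n+2)+2k$: for $L$ general in $V$,
$$\mathcal{F}_L \isom \sO_{\bbP^1}(n+2+B)^{A-2k+B\cdot A} \oplus \sO_{\bbP^1}(n+3+B)^{2k-B\cdot A},$$
with $B = \lfloor 2k/A\rfloor$. This identifies $U$ as the stratum $N^n_{n-k}((n+2)^{\rho^{n,k}_r},(n+2+B)^{A-2k+B\cdot A},(n+3+B)^{2k-B\cdot A})$; since $V$ is irreducible and $U$ is non-empty, $U$ is dense in $V$, proving the claim.

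The main obstacle, which is essentially cosmetic in this setting, is ensuring that the semicontinuity statement is applied to the correct family (namely the pullback of the equisingular normal sheaf through $p = \pi_L \circ \nu_n$ so that $\mathcal{F}_L$ really is a bundle over $\bbP^1$) and that the ``most balanced'' splitting coincides with the numerical expression output by Ramanathan's lemma. Both ingredients are already packaged in the preceding results, so the proof reduces to a clean assembly of facts established earlier rather than any new computation.
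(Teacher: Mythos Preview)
Your proposal is correct and follows essentially the same approach as the paper, which simply states that the proposition follows from Theorem \ref{mainthmn1} and Proposition \ref{codim.Normal} without further elaboration. You add useful detail by making explicit the semicontinuity argument needed to justify that the generic-splitting locus is open in $V$, a step the paper leaves implicit.
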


We want to point out that the summand $\sO_{\bbP^1}(n+2)$ is of particular interest because it has the smallest degree for ordinary singularity case. Thus we have proved the following theorem:
\begin{teo}\label{FinallyTheorem}
For any $0\leq \alpha \leq n-k-2$, the variety parametrizing rational curves $C$ of degree $n$ in $\bbP^{n-k}$ whose normal bundle $N_{C,\bbP^{n-k}}$ has the summand $\sO_C(n+2)^{\alpha}$ is irreducible.
\end{teo}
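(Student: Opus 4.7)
The plan is to pass to the Grassmannian picture of centres of projection $L\isom\bbP^{k-1}$, exhibit the relevant locus as the image of an irreducible incidence variety, and then transport irreducibility back to the Hilbert scheme via the correspondence between $N^n_{n-k}(\cdots)$ and $\sN^n_{n-k}(\cdots)$ recalled in the introduction.

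First I would translate the splitting condition into a rank condition. By Proposition \ref{prop:splittingnormal}, the number of $\sO_{\bbP^1}(n+2)$ summands in $N_{\pi_L(C_n);\bbP^{n-k}}$ is exactly $n-1-\rank(N^L_{n,k})$. Therefore the requirement that $\sO_C(n+2)^{\alpha}$ occur as a summand becomes the closed determinantal condition
\[
W_\alpha=\{L\in Gr(\bbP^{k-1},\bbP^n): \rank(N^L_{n,k})\leq n-1-\alpha\},
\]
cut out by the vanishing of the $(n-\alpha)\times(n-\alpha)$ minors of $N^L_{n,k}$.

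Next I would translate the rank condition into a geometric condition via apolarity, mimicking the Lemma preceding Theorem \ref{mainthmn3}. That Lemma handles the equality case $\rank(N^L_{n,k})=n-1-\rho$; the ``$\leq$'' version I need here follows immediately, since any $\alpha$-dimensional subsystem of an $(\alpha+j)$-dimensional linear system has a base locus at least as large as that of the larger system. Hence $W_\alpha$ coincides with the set of $L$'s lying in the base locus of \emph{some} linear system $\Phi$ generated by $\alpha$ linearly independent $(n-2)$-secant $\bbP^{n-3}$'s to $C_n$.

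Then I would introduce the incidence variety
\[
I_\alpha=\{(L,\Phi): L\subset \mathrm{Base}(\Phi)\}\subset Gr(\bbP^{k-1},\bbP^n)\times Gr(\bbP^{\alpha-1},\bbP^{n-2}),
\]
identifying $Gr(\bbP^{\alpha-1},\bbP^{n-2})$ with the space of $\alpha$-dimensional systems of such secant $\bbP^{n-3}$'s via the apolarity description used throughout Section~2. The second projection has fibre $Gr(\bbP^{k-1},\mathrm{Base}(\Phi))$ over $\Phi$. Provided $\mathrm{Base}(\Phi)$ has the expected dimension $n-3\alpha$ generically, the second projection is a Grassmannian bundle over an irreducible base, so $I_\alpha$ itself is irreducible, and therefore so is its image $W_\alpha$ under the first projection.

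The main obstacle, and the step where a nontrivial input is required, is the equidimensionality of those fibres: the expected dimension of $\mathrm{Base}(\Phi)$ could in principle jump upward on a closed subset of $Gr(\bbP^{\alpha-1},\bbP^{n-2})$, splitting $I_\alpha$ into several components. This is exactly ruled out by the non-defectivity of the Grassmannians of secant spaces of rational normal curves, i.e.\ Theorem \ref{chiantiniciliberto} applied via Proposition \ref{dec.simultanee}. The range $\alpha\leq n-k-2$ matches the degree constraint $\sum d_i=2n-2$ with $d_i\geq n+2$; in the subrange $3\alpha>n-k+1$ one has $k-1>n-3\alpha$, the fibres of the second projection are empty, and $W_\alpha$ is vacuously irreducible. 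Finally, passing from the irreducibility of $W_\alpha\subset Gr(\bbP^{k-1},\bbP^n)$ to the analogous subvariety of the Hilbert scheme preserves the number of irreducible components by the discussion in the introduction, completing the argument.
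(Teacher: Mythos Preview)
Your approach is the paper's own: the paper deduces Theorem~\ref{FinallyTheorem} directly from Proposition~\ref{codim.Normal}, whose proof is exactly the incidence-variety argument you outline (parametrize $\Phi$ by $Gr(\bbP^{\alpha-1},\bbP^{n-2})$, take fibres $Gr(\bbP^{k-1},\mathrm{Base}(\Phi))$, and invoke Chiantini--Ciliberto for the dimension count). So the strategy is correct and matches the paper.

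However, your treatment of the subrange $3\alpha>n-k+1$ contains a genuine error. You claim that there the fibres of the second projection are empty and hence $W_\alpha$ is ``vacuously irreducible''. This is false: the \emph{generic} fibre is empty, but special fibres are not. For instance, with $n=10$, $k=2$, $\alpha=6$ (so $3\alpha=18>9=n-k+1$), take $\Phi$ to be the $6$-dimensional space of all degree-$8$ forms divisible by a fixed squarefree cubic $\psi\in T_3$; then $\mathrm{Base}(\Phi)$ is the $3$-secant plane $\bbP^2$ cut out by $\psi\circ f=0$, which certainly contains lines $L$. Any such $L$ has $\rank N^L_{10,2}=3$, giving six copies of $\sO_{\bbP^1}(12)$ in the normal bundle. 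So $W_6\neq\emptyset$, and your argument does not cover it. (The paper itself states Proposition~\ref{codim.Normal} only under the hypothesis $\rho^{n,k}_r\le(n-k+1)/3$, and its passage to the full range $0\le\alpha\le n-k-2$ in Theorem~\ref{FinallyTheorem} is likewise not spelled out; but your explicit claim of emptiness is an additional mistake, not merely an omission.)

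A smaller point: you invoke Theorem~\ref{chiantiniciliberto} to rule out the jumping of $\dim\mathrm{Base}(\Phi)$. That theorem computes $\dim G_{s,r}(C)$, i.e.\ the dimension of the \emph{image} of the incidence correspondence, which is what the paper uses it for; it does not assert that all fibres of $\phi_2$ have the expected dimension (and indeed, as the example above shows, they do not). The irreducibility of $I_\alpha$ therefore needs a separate argument---e.g.\ that the contribution over the jumping locus has strictly smaller total dimension---rather than a direct appeal to non-defectivity.
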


\subsection{Main Theorem for Restricted Tangent Bundle}

When considering the restricted tangent bundle on $C=\pi_L(C_n)\subset \bbP^{n-k}$,  we can notice that there are two different cases:
\begin{itemize}
\item[i)] $2k<n$, so $k<\frac{n}{2}$;
\item[ii)] $2k\geq n$, so $\frac{n}{2}\leq k<n-3$.
\end{itemize}
\begin{definition}
For all $n\in \bbN$, let us define $\delta^{n,k}_r \in\bbN$ as follows:
$$
\delta^{n,k}_r:=\left\lbrace
\begin{array}{cl}
r & \mbox{ , for } 2k\leq n \mbox{ and } 1\leq 2r\leq k-1;\\
n-3k+r-1 &\mbox{ , for } 2k > n \mbox{ and }  r \leq n-k-1.
\end{array}
\right.
$$

\end{definition}

\begin{remark}
In the above definition, the conditions on $r$ come from 
$$
\rank(T^L_{n,k})\geq k+1.
$$
\end{remark}

So we need the following proposition (see \cite{ramanathan}):
\begin{propo}
The generic splitting type of a vector bundle $\mathcal{F}$ on $\bbP^1$, with $\deg(\mathcal{F}^{\vee}(n+1))=-k$ and $\mathcal{F}\isom\bigoplus^{n-\delta^{n,k}_r-k}_{i=0}\sO_{\bbP^1}(l_i)$, with $l_i\geq n+2$, is the following:
$$
((n+1+B)^{A-k+B\cdot A},(n+2+B)^{k-B\cdot A}),
$$
where $A=n-\delta^{n,k}_r-k$ and $B=\lfloor \frac{k}{A}\rfloor$.
\end{propo}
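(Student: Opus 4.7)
My strategy is to reduce the proposition to the classical theorem on generic splitting of vector bundles on $\bbP^1$ cited as \cite{ramanathan}, combined with Euclidean division of the degree by the rank.

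First I would extract the numerical data. The bundle $\mathcal{F}$ has rank $A=n-\delta^{n,k}_r-k$, and from $\deg(\mathcal{F}^{\vee}(n+1))=-k$ one immediately computes $\deg\mathcal{F}=A(n+1)+k$.

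Next I would invoke the generic splitting theorem: in any flat family of rank $A$, degree $d$ bundles on $\bbP^1$, the generic member has the most balanced splitting $\sO_{\bbP^1}(q)^{A-s}\oplus\sO_{\bbP^1}(q+1)^{s}$ with $d=qA+s$, $0\le s<A$. This is the standard upper-semicontinuity argument via $h^{1}(\mathcal{F}(m))$: unbalancing the Harder--Narasimhan polygon is a closed condition, so specializations only move splittings further from balanced, never closer. Applying this with $d=A(n+1)+k$ and writing $k=BA+(k-BA)$ where $B=\lfloor k/A\rfloor$, one obtains $q=n+1+B$ and $s=k-BA$, giving precisely $\sO_{\bbP^1}(n+1+B)^{A-k+BA}\oplus\sO_{\bbP^1}(n+2+B)^{k-BA}$, as claimed.

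Finally I would verify that this balanced splitting satisfies the lower bound $l_i\ge n+2$ singled out by Prop.\ \ref{splittingrank}. The smallest summand degree is $n+1+B$, so the condition is $B\ge1$, i.e.\ $k\ge A=n-\delta^{n,k}_r-k$, which holds under the running hypothesis $\delta^{n,k}_r\le (n-k+1)/2$ of Theorem \ref{mainthmtg3}. Since $\{l_i\ge n+2\}$ is Zariski open in the family of rank-$A$, degree-$A(n+1)+k$ bundles, the balanced splitting is also the generic one in the constrained subfamily. There is no substantial obstacle: the statement is a transcription of Ramanathan's theorem via the Euclidean division of $k$ by $A$, and the only care required is to check that the open constraint $l_i\ge n+2$ is indeed compatible with the balanced splitting under the hypotheses of Theorem \ref{mainthmtg3}.
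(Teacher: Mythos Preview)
Your approach is exactly the paper's: the proposition is stated there with no proof beyond the parenthetical ``(see \cite{ramanathan})'', and your reduction---compute rank $A$ and degree $A(n+1)+k$, then read off the balanced splitting via Euclidean division of $k$ by $A$---is precisely the intended argument.

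One minor correction to your final paragraph: you claim $B\ge 1$ (i.e.\ $k\ge A$, equivalently $\delta^{n,k}_r\ge n-2k$) follows from the hypothesis $\delta^{n,k}_r\le (n-k+1)/2$ of Theorem~\ref{mainthmtg3}. But that hypothesis is an \emph{upper} bound on $\delta^{n,k}_r$, and what you need is a \emph{lower} bound; the two inequalities are unrelated. The compatibility $B\ge 1$ is automatic in the regime $2k>n$, but in the regime $2k\le n$ it requires a separate check against the definition of $\delta^{n,k}_r$. This does not affect the proposition itself, which is simply recording the balanced splitting of a bundle with given rank and degree; the constraint $l_i\ge n+2$ in the statement is contextual (describing which $\mathcal{F}$ arise geometrically via Prop.~\ref{splittingrank}) rather than a hypothesis restricting the family over which ``generic'' is taken.
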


In a similar way as for normal bundle, we obtain the following result which is stronger thanks to a result of Verdier (see \cite{verdier} and \cite{Ramella1}):
\begin{teo}[Main Theorem for Restricted Tangent Bundle]\label{mainthmtg3}

Let $\delta^{n,k}_r\leq (n-k+1)/2$. The following conditions are equivalent:
\begin{itemize}
\item[i)] we have 
$$
T^n_{n-k}((n+1)^{\delta^{n,k}_r},(n+1+B)^{A-k+B\cdot A},(n+2+B)^{k-B\cdot A}),
$$ 
where $A=n-\delta^{n,k}_r-k$ and $B=\lfloor \frac{k}{A}\rfloor$;

\item[ii)]  the centre of projection $L$ is contained in the base locus of a linear system $\Phi$ generated by $\delta^{n,k}_r$ linearly independent linearly independent $\bbP^{n-2}$'s which intersect $C_n\subset \bbP^n$ in $n-1$ distinct points.
\end{itemize}

\end{teo}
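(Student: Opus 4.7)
The proof follows the blueprint of Theorem~\ref{mainthmn3} almost verbatim, but with the Catalecticant matrices of bidegree $(1,n-1)$ replacing those of bidegree $(2,n-2)$, and apolar forms of degree $n-1$ replacing those of degree $n-2$. The first task is to prove the analog of the lemma preceding Theorem~\ref{mainthmn1}, namely that $\rank(T^L_{n,k}) = n - \delta^{n,k}_r$ if and only if condition (ii) holds.

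For this apolarity step I would identify $\ker(T^L_{n,k})$ with the space of $\phi \in T_{n-1}$ satisfying $\phi \circ f_i = 0$ for all binary forms $f_i$ corresponding to a set of generators $p_1,\dots,p_k$ of $L$. A nonzero $\phi$ of degree $n-1$ factors into $n-1$ linear forms whose corresponding points on $\bbP^1$ have images under $\nu_n$ spanning an $(n-1)$-secant $\bbP^{n-2}$ to $C_n$; by the Apolarity Lemma of \cite{iarrobinokanev}, the condition $\phi \circ f_i = 0$ for all $i$ is equivalent to each $p_i$, and hence $L$, lying in that $\bbP^{n-2}$. A $\delta^{n,k}_r$-dimensional kernel therefore corresponds precisely to a linear system $\Phi$ of dimension $\delta^{n,k}_r$ whose base locus contains $L$. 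The hypothesis $\delta^{n,k}_r \le (n-k+1)/2$ ensures that such an $L$ has enough room, by the same dimension count through the Grassmannian of secant spaces as in Proposition~\ref{codim.Normal}, using Theorem~\ref{chiantiniciliberto}.

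Once the rank is identified, Proposition~\ref{splittingrank} yields
\[
T\bbP^{n-k}|_{\pi_L(C_n)} \isom \sO_{\bbP^1}(n+1)^{\delta^{n,k}_r} \oplus \mathcal{F},
\]
where $\mathcal{F}^{\vee}(n+1)$ has rank $n-k-\delta^{n,k}_r$ and degree $-k$, with every summand of $\mathcal{F}$ of degree $\ge n+2$. Feeding this into the Ramanathan-type statement quoted just before the theorem gives the generic splitting of $\mathcal{F}$ as $\sO_{\bbP^1}(n+1+B)^{A-k+B\cdot A}\oplus\sO_{\bbP^1}(n+2+B)^{k-B\cdot A}$, which is (i); and (i)$\Rightarrow$(ii) is immediate from the rank criterion.

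The main obstacle, and the reason the theorem is stated as an equivalence holding for every such $L$ rather than merely for the general one as in the normal-bundle case, is upgrading the result to an equality of loci. Here I would invoke the Verdier--Ramella irreducibility theorem (cf.~\cite{verdier}, \cite{Ramella1}): the stratum of $H^{m,n}$ on which $T\bbP^{n-k}|_C$ has exactly the splitting of (i) is irreducible of codimension $\ext^1(T\bbP^{n-k}|_C(-1), T\bbP^{n-k}|_C(-1))$. An incidence-variety argument modeled on Proposition~\ref{codim.Normal} identifies the locus of (ii) as an irreducible subvariety of $Gr(\bbP^{k-1},\bbP^n)$ whose codimension, computed via Theorem~\ref{chiantiniciliberto}, matches the Verdier codimension. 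Since the former dominates the latter through the open subset where the rank of $T^L_{n,k}$ is generic, the two irreducible varieties must coincide, which gives the full equivalence. The delicate point will be verifying that these two codimensions actually agree, which in the normal-bundle case was unavailable and forced the weaker ``moreover'' clause in Theorem~\ref{mainthmn3}.
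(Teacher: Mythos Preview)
Your proposal is correct and matches the paper's own approach, which is in fact barely sketched: the paper provides no displayed proof at all, only the sentence ``In a similar way as for normal bundle, we obtain the following result which is stronger thanks to a result of Verdier (see \cite{verdier} and \cite{Ramella1}).'' You have filled in exactly the intended argument --- the apolarity/rank lemma with $T_{n-1}$ in place of $T_{n-2}$, Proposition~\ref{splittingrank} in place of Proposition~\ref{prop:splittingnormal}, the Ramanathan computation of the generic splitting, and the Verdier--Ramella irreducibility to promote the generic statement to an equality of loci --- and you have correctly identified why the tangent-bundle theorem is stronger than its normal-bundle counterpart.
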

By a result in \cite{verdier} and \cite{Ramella1}:
\begin{cor}\label{maincortg3}
Let $\delta^{n,k}_r\leq (n-k+1)/2$.
 
$T^n_{n-k}((n+1)^{\delta^{n,k}_r},(n+1+B)^{A-k+B\cdot A},(n+2+B)^{k-B\cdot A})$, where $A=n-\delta^{n,k}_r-k$ and $B=\lfloor \frac{k}{A}\rfloor$,
is an irreducible variety of codimension $\delta^{n,k}_r(2k-n+\delta^{n,k}_r)$ in $Gr(\bbP^{k-1},\bbP^n)$, formed by the linear spaces $L\isom\bbP^{k-1}$ which lie in the base locus of a linear system $\Phi$ generated by $\delta^{n,k}_r$ linearly independent linearly independent $\bbP^{n-2}$'s which intersect $C_n\subset \bbP^n$ in $n-1$ distinct points.
\end{cor}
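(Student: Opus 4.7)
The plan is to combine Theorem \ref{mainthmtg3} with the Verdier--Ramella irreducibility result cited immediately before the corollary. Theorem \ref{mainthmtg3} already establishes the set-theoretic identification between $T^n_{n-k}((n+1)^{\delta^{n,k}_r},(n+1+B)^{A-k+B\cdot A},(n+2+B)^{k-B\cdot A})$ and the locus $V \subset Gr(\bbP^{k-1},\bbP^n)$ of those $\bbP^{k-1}$'s which are contained in the base locus of some linear system $\Phi$ of $\delta^{n,k}_r$ linearly independent $(n-1)$-secant $\bbP^{n-2}$'s to $C_n$. It therefore suffices to show that $V$ is irreducible and to compute its codimension.

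For irreducibility I would invoke Verdier's theorem directly: the stratum in the Hilbert scheme with fixed splitting type of the restricted tangent bundle is irreducible, and, as noted in the Introduction, the corresponding subvariety $V$ in the Grassmannian has the same number of irreducible components.

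For the codimension I would follow the incidence-variety argument of Prop.\ \ref{codim.Normal} verbatim, replacing $(n-2)$-secant $\bbP^{n-3}$'s by $(n-1)$-secant $\bbP^{n-2}$'s. Linear systems $\Phi$ of affine dimension $\delta^{n,k}_r$ of $(n-1)$-secant $\bbP^{n-2}$'s to $C_n$ correspond, via the identification with binary forms of degree $n-1$, to the points of $Gr(\bbP^{\delta^{n,k}_r-1},\bbP^{n-1})$, an irreducible variety of dimension $\delta^{n,k}_r(n-\delta^{n,k}_r)$. Under the hypothesis $\delta^{n,k}_r \leq (n-k+1)/2$, the base locus of a generic such $\Phi$ is a $\bbP^{n-2\delta^{n,k}_r}$, so the fibre over $\Phi$ of the projection from the incidence variety
\[
I_S = \{(L,\pi) : L \in Gr(\bbP^{k-1},\bbP^n),\ \pi \in S,\ L \subset \pi\}
\]
(where $S$ is the space of base loci) has dimension $k(n-2\delta^{n,k}_r+1-k)$. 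The codimension of $V$ in $Gr(\bbP^{k-1},\bbP^n)$ is then
\[
k(n-k+1) - \delta^{n,k}_r(n-\delta^{n,k}_r) - k(n-2\delta^{n,k}_r+1-k) = \delta^{n,k}_r(2k-n+\delta^{n,k}_r),
\]
and the Chiantini--Ciliberto non-defectivity theorem (Theorem \ref{chiantiniciliberto}, together with Prop.\ \ref{dec.simultanee}) guarantees that this expected codimension is the actual one.

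The main verification, which is not automatic, is that the bound $\delta^{n,k}_r \leq (n-k+1)/2$ genuinely forces the base locus of a generic $\Phi$ to have codimension exactly $2\delta^{n,k}_r$ in $\bbP^n$ and forces the first projection of $I_S$ to be generically surjective onto $V$; this is the tangent-bundle analogue (with $2$ replacing $3$ throughout) of the bookkeeping carried out in the proof of Prop.\ \ref{codim.Normal}. As a sanity check, one can alternatively extract the codimension from Verdier's formula $\mathrm{ext}^1(T\bbP^{n-k}|_C(-1),T\bbP^{n-k}|_C(-1))$ applied to the splitting type in (i) and verify it agrees with $\delta^{n,k}_r(2k-n+\delta^{n,k}_r)$.
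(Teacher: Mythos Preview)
Your proposal is correct and follows essentially the same approach as the paper: the corollary is stated immediately after the line ``By a result in \cite{verdier} and \cite{Ramella1}'', so the paper's own proof is nothing more than the combination of Theorem~\ref{mainthmtg3} with the Verdier--Ramella irreducibility/codimension result. Your explicit incidence-variety computation of the codimension is the tangent-bundle analogue (with $2$ replacing $3$) of Prop.~\ref{codim.Normal} and simply makes explicit what the paper leaves to the citation; the answer $\delta^{n,k}_r(2k-n+\delta^{n,k}_r)$ agrees with Verdier's $\mathrm{ext}^1$ formula, as you note.
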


\section{Rational Space Curves of degree $5$}
In some particular cases we can compute the multi-degree of the varieties considered above thanks to the given geometric description.
In this section we will consider the case of rational space curves of degree $5$; this case has been fully investigated with regard to the restricted tangent bundle by Verdier (see \cite{verdier}) and Ramella (see \cite{Ramella1}).

By \cite{verdier} and our results in the previous section we have:
\begin{teo}
The centre of projection $L\isom\bbP^1$ is the common line of a pencil of $4-$secant $\bbP^{3}$'s to the rational normal curve $C_5$ in $\bbP^5$ if and only if :
$$
T\bbP^{3}|_{\pi_L(C_5)}\isom \sO_{\bbP^1}(6)^{2}\oplus\sO_{\bbP^1}(8).
$$
\end{teo}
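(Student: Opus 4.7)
The plan is to derive this statement as the specialization of the Main Theorem for the Restricted Tangent Bundle (Theorem \ref{mainthmtg3}) to the parameters $n=5$, $k=2$. The whole task is therefore a pair of identifications: match the prescribed splitting with condition (i), and match the pencil-of-$4$-secants condition with condition (ii).

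For the splitting side, I would read off $\delta^{n,k}_{r}$ from the number of copies of the minimal summand. The decomposition $\sO_{\bbP^1}(6)^{2}\oplus\sO_{\bbP^1}(8)$ contains two summands equal to $\sO(n+1)=\sO(6)$, which is the smallest degree allowed in the ordinary-singularity regime, so necessarily $\delta^{5,2}_{r}=2$. The hypothesis $\delta^{n,k}_{r}\leq (n-k+1)/2$ becomes $2\leq 2$, so Theorem \ref{mainthmtg3} applies (at the boundary). Setting $A=n-\delta^{n,k}_{r}-k=1$ and $B=\lfloor k/A\rfloor=2$, the splitting produced by condition (i) is
$$
\sO(n+1)^{\delta^{n,k}_{r}}\oplus\sO(n+1+B)^{A-k+B\cdot A}\oplus\sO(n+2+B)^{k-B\cdot A}=\sO(6)^{2}\oplus\sO(8)^{1}\oplus\sO(9)^{0},
$$
which reproduces exactly the splitting in the statement.

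For the geometric side, condition (ii) of Theorem \ref{mainthmtg3} asserts that $L$ lies in the base locus of a linear system $\Phi$ generated by $\delta^{n,k}_{r}=2$ linearly independent $\bbP^{n-2}=\bbP^{3}$'s, each meeting $C_{n}=C_{5}$ in $n-1=4$ distinct points. A linear system of affine dimension $2$ is a pencil, and two generic $\bbP^{3}$'s in $\bbP^{5}$ intersect in dimension $3+3-5=1$. Hence the base locus of such a pencil is generically a line, so the clause "$L\isom\bbP^{1}$ is contained in the base locus" coincides with "$L$ is the common line of a pencil of $4$-secant $\bbP^{3}$'s to $C_{5}$." This translation is precisely what rewrites the conclusion of Theorem \ref{mainthmtg3} in the form given here.

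With these two identifications the theorem follows at once from the biconditional in Theorem \ref{mainthmtg3}; no new computation is required. The only point meriting care is the boundary behaviour at $\delta^{n,k}_{r}=(n-k+1)/2$ and the non-degeneracy of the base locus of a generic pencil of $4$-secant $\bbP^{3}$'s, i.e.\ the fact that the expected dimension of the Grassmannian of such pencils is attained. Both are guaranteed by the non-defectivity results for Grassmannians of secant varieties of curves used throughout the paper, in particular Theorem \ref{chiantiniciliberto}, so this is the main (and essentially only) obstacle, and it is already handled by tools cited earlier.
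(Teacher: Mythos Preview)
Your proposal is correct and follows exactly the route the paper indicates: the statement is presented in the paper without a detailed proof, only the remark ``By \cite{verdier} and our results in the previous section'', and your argument makes this explicit by specializing Theorem~\ref{mainthmtg3} to $n=5$, $k=2$, $\delta^{n,k}_r=2$. Your identifications of both sides of the biconditional, the verification $A=1$, $B=2$, and the observation that the base locus of a pencil of $\bbP^3$'s in $\bbP^5$ is a line are all accurate.
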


\begin{cor}
The variety of lines $L$ that, as centre of projection, give a rational curve of degree $5$ in $\bbP^{3}$ which has $T\bbP^{3}_{\pi_L(C_5),\bbP^{3}}\isom\sO_{\bbP^1}(6)\oplus\sO_{\bbP^1}(6)\oplus\sO_{\bbP^1}(8)$ is an irreducible variety of codimension $2$ in $Gr(\bbP^1,\bbP^5)$, formed by the lines $L$ which are common base locus for a pencil of $4-$secant $\bbP^3$'s to the rational normal curve $C_5$ in $\bbP^5$.
\end{cor}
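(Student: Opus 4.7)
The plan is to specialize the general framework developed in Theorem \ref{mainthmtg3} and Corollary \ref{maincortg3} to the parameters $n=5$, $k=2$. The splitting $\sO_{\bbP^1}(6)^2\oplus\sO_{\bbP^1}(8)=(n+1)^2\oplus(n+3)$ corresponds to $\delta^{5,2}_r = 2$, since by sequence \eqref{succ:T} the multiplicity of $\sO_{\bbP^1}(n+1)$ in the splitting of $T\bbP^{n-k}|_{\pi_L(C_n)}$ equals $n-\rank(T^L_{n,k})=\delta$. The preceding theorem of this section (Verdier's description, which follows from Theorem \ref{mainthmtg3}) already provides the equivalence between this splitting type and the condition that $L$ be the base locus of a pencil of $4$-secant $\bbP^3$'s to $C_5$. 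Hence the only items left to establish are the codimension $2$ and the irreducibility of the resulting locus in $Gr(\bbP^1,\bbP^5)$.

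To handle these I would introduce the incidence correspondence
\[
I = \bigl\{\, (L,\Phi)\in Gr(\bbP^1,\bbP^5)\times\mathcal{P} : L\subset \mathrm{Bs}(\Phi) \,\bigr\},
\]
where $\mathcal{P}$ is the parameter space of pencils of $4$-secant $\bbP^3$'s to $C_5$. By the apolarity setup of Section 2 and by Proposition \ref{dec.simultanee}, a $4$-secant $\bbP^3$ to $C_5$ is represented by a binary form of degree $n-1=4$; accordingly $\mathcal{P}\cong Gr(\bbP^1,\bbP^4)$, which by Theorem \ref{chiantiniciliberto} has the expected dimension $6$ and is irreducible. Next I would analyze the two projections $\phi_1\colon I\to Gr(\bbP^1,\bbP^5)$ and $\phi_2\colon I\to\mathcal{P}$. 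For a generic pencil $\Phi$ the base locus is the intersection of two general $\bbP^3$'s in $\bbP^5$, which has dimension $3+3-5=1$, so $\mathrm{Bs}(\Phi)$ is itself a line and $\phi_2^{-1}(\Phi)$ is a single point. Conversely, for a general $L$ in the image, the matrix $T^L_{5,2}$ (of size $4\times 5$) has $\rank T^L_{5,2}=3$ by the splitting hypothesis, so $\ker T^L_{5,2}$ is $2$-dimensional and determines a unique pencil $\Phi_L\in\mathcal{P}$; hence $\phi_1$ is also generically one-to-one onto its image.

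Combining these observations, the desired variety is the image of the irreducible $6$-dimensional $Gr(\bbP^1,\bbP^4)$ under a generically injective rational map into the $8$-dimensional $Gr(\bbP^1,\bbP^5)$, which gives both irreducibility and codimension $8-6=2$, matching the value $\delta(2k-n+\delta)=2(4-5+2)=2$ predicted by Corollary \ref{maincortg3}. The main obstacle is verifying that both projections $\phi_1,\phi_2$ are generically injective in this specific low-dimensional setting: the key points are that a general pencil in $\mathcal{P}$ has base locus of \emph{exactly} dimension $1$ (rather than larger, which would make $\phi_2$ have positive-dimensional fibers), and symmetrically that for a general line $L$ in the stratum the rank of $T^L_{5,2}$ equals $3$ and not lower (which would give $\phi_1^{-1}(L)$ of positive dimension). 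Both transversality statements follow from the non-defectivity of the Grassmannians of secants (Theorem \ref{chiantiniciliberto}) applied to $C_5$.
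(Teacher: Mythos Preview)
Your proposal is correct and follows essentially the same approach as the paper. The paper states this corollary without proof, treating it as the specialization $n=5$, $k=2$, $\delta=2$ of Corollary~\ref{maincortg3} (which in turn rests on Verdier--Ramella for irreducibility and the codimension formula $\delta(2k-n+\delta)=2$); your incidence-correspondence argument is exactly the technique the paper uses in the parallel Proposition~\ref{codim.Normal}, and the appeal to Theorem~\ref{chiantiniciliberto} for the effectiveness of the dimension count mirrors the paper's own justification there.
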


Now we want to analyze the splitting type of the normal bundle.
\begin{remark}
If $L\in Gr(\bbP^1,\bbP^5)$ does not lie on the Chow Hypersurface of the Secant Variety $\sigma_2(C_5)$ then $\pi_L(C_5)$ is smooth.
\end{remark}

We have that $deg(N^{\dual}_{\pi_L(C_5);\bbP^3}(7))=-4$ and $N^{\dual}_{\pi_L(C_5);\bbP^3}(7)\isom\sO_{\bbP^1} \oplus \sO_{\bbP^1}(-4)$ if and only if $N_{\pi_L(C_5);\bbP^3}(-5) \isom\sO_{\bbP^1}(2)\oplus \sO_{\bbP^1}(6)$. If we denote the following matrix as:

$$
N^L_{5,2}:=\left(
\begin{array}{cccccc}
a^1_0 & a^1_1 & a^1_2 & a^1_3 \\
-2a^1_1 & -2a^1_2 & -2a^1_3 & -2a^1_4 \\
a^1_2 & a^1_3 & a^1_4 & a^1_5 \\
a^2_0 & a^2_1 & a^2_2 & a^2_3 \\
-2a^2_1 & -2a^2_2 & -2a^2_3 & -2a^2_4 \\
a^2_2 & a^2_3 & a^2_4 & a^2_5 \\
\end{array}
\right),
$$

we have that $N_{\pi_L(C_5);\bbP^3}(-5)\isom\sO_{\bbP^1}(2) \oplus \sO_{\bbP^1}(6)$ if and only if $\rank(N^L_{5,2})= 3$  by Prop.\ref{prop:splittingnormal}.

\begin{propo}\label{propC5}
Let $C=\pi_L(C_5)\subset \bbP^3$ be a quintic space curves, with at most ordinary singularities. Then $N_{\pi_L(C_5);\bbP^3}\isom\sO_{\bbP^1}(7)\oplus \sO_{\bbP^1}(11)$ if and only if the centre of projection $L\in Gr(\bbP^1,\bbP^5)$ lies on a $3-$secant plane to $C_5$.  
\end{propo}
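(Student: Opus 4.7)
My plan is to derive this as a direct specialization of the general results established above, with parameters $n=5$, $k=2$, $r=1$. Since $3k=6\geq n-1=4$, we are in the first case of the definition of $\rho^{n,k}_r$, so $\rho^{5,2}_r=r$; taking $r=1$ gives $\rho^{5,2}_1=1$, which satisfies the hypothesis $\rho^{n,k}_r\leq (n-k+1)/3=4/3$. Proposition \ref{prop:splittingnormal} then reads
$$
N_{\pi_L(C_5);\bbP^3}\isom \sO_{\bbP^1}(7)^{4-\rank(N^L_{5,2})}\oplus\mathcal{F},
$$
with $\mathcal{F}$ a sum of line bundles of degree $\geq 8$ and $\deg(\mathcal{F}^{\vee}(7))=-4$, while Remark \ref{boundaryconditionsplittingnormalbundle2codimk} yields the lower bound $\rank(N^L_{5,2})\geq k+1=3$.

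The next step is to translate the splitting $\sO(7)\oplus\sO(11)$ into a rank condition. Since $c_1(N)=n^2-(k-1)n-2=18$, any splitting $(n_1,n_2)$ with $n_1\leq n_2$ satisfies $n_1+n_2=18$ and, by Lemma \ref{boundaryconditionsplittingnormalbundlecodimk}, $7\leq n_1\leq n_2\leq 11$. The only possibilities are thus $(7,11)$, $(8,10)$, $(9,9)$, and a summand $\sO(7)$ appears only in the first. By the displayed formula this happens precisely when $\rank(N^L_{5,2})=3$: the rank lies in $\{3,4\}$, and a multiplicity of $2$ copies of $\sO(7)$ would force $n_1=n_2=7$, violating $n_1+n_2=18$.

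Finally, I would invoke the Lemma preceding Theorem \ref{mainthmn1} with $r=1$: it says $\rank(N^L_{5,2})=n-1-\rho^{5,2}_1=3$ if and only if $L$ is contained in the base locus of a linear system generated by $\rho^{5,2}_1=1$ linearly independent $\bbP^{n-3}=\bbP^{2}$'s meeting $C_5$ in degree $n-2=3$. A single $\bbP^2$ is its own base locus, and a plane meeting $C_5$ in $3$ points is by definition a $3$-secant plane to $C_5$. Combining the two equivalences gives the claimed biconditional. The only mildly delicate point, which I expect to be the main obstacle, is separating $(7,11)$ from $(8,10)$ and $(9,9)$; this is handled by the rank bound $\rank(N^L_{5,2})\geq 3$ combined with the $c_1$ constraint, which together force the multiplicity of $\sO(7)$ to be either $0$ or exactly $1$.
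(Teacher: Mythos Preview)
Your reduction to the general machinery is sound up to the last step, but there is a genuine gap precisely where you wave your hand. The Lemma preceding Theorem~\ref{mainthmn1} gives you that $\rank N^L_{5,2}=3$ if and only if $L$ lies in a $\bbP^{2}$ meeting $C_5$ \emph{in degree $3$}, i.e.\ in a divisor of degree $3$ on $C_5$. You then assert that such a plane ``is by definition a $3$-secant plane'', but this silently identifies ``degree $3$'' with ``three distinct points''. The degree-$3$ divisor could equally well be $2q_1+q_2$ (the plane spanned by a tangent line and a further point) or $3q_1$ (the osculating plane), and neither of these is an honest $3$-secant plane in the sense of the Proposition.

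This is not a technicality: it is exactly the content that the paper's own proof supplies in the $\Rightarrow$ direction. There the apolar form $\phi\in T_3$ is factored and the three cases (three simple roots; one double and one simple; one triple root) are treated separately. In the two degenerate cases the plane contains a tangent line $T_qC_5$, and since $L$ is a line in that plane it must meet $T_qC_5$; projecting from such an $L$ produces a cusp on $\pi_L(C_5)$, contradicting the standing hypothesis of ordinary singularities. Only then does one conclude that the plane is a genuine $3$-secant plane. Your argument becomes correct once you insert this observation, but as written the biconditional is not established: you have used the ordinary-singularities hypothesis nowhere, and without it the equivalence with ``$3$-secant plane'' is simply false.

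In short, your route via Proposition~\ref{prop:splittingnormal} and the Lemma is a legitimate and more conceptual alternative to the paper's hands-on argument, and your analysis of the possible splittings $(7,11),(8,10),(9,9)$ and of the rank bound is fine. What it buys you is that the $\Leftarrow$ direction and the reduction to $\rank N^L_{5,2}=3$ come for free. What it does \emph{not} buy you is the passage from ``scheme-theoretic length $3$'' to ``three distinct points''; that still requires the singularity hypothesis, and you should say so.
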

\begin{proof}
\begin{itemize}
\item[$\Leftarrow$]
If $L$ belongs to a $3-$secant plane, then there exist two points $p_1,p_2\in L$ such that the corresponding forms $f_1,f_2\in S_n$ have the same additive decomposition of length $3$:
$$
f_i=c^i_1(L_1)^5+c^i_2(L_2)^5+c^i_3(L_3)^5,
$$
for $i=1,2$ and $c^i_j\in \bbC$. Therefore we can construct a differential form $\phi=\prod^3_{j=1}(L_j)^{\perp}$ which generates $\ker N^L_{5,2}$, so $\rank N^L_{5,2}=3$ and by the above considerations $N_{\pi_L(C_5);\bbP^3}\isom\sO_{\bbP^1}(7)\oplus \sO_{\bbP^1}(11)$.
\item[$\Rightarrow$]
If the rational curve of degree $5$ projected from $L$ has $N_{\pi_L(C_5);\bbP^3}\isom\sO_{\bbP^1}(7)\oplus \sO_{\bbP^1}(11)$, then $\rank N^L_{5,2}=3$. Therefore there exists a differential form $\phi\in T_3$ apolar to both forms $f_1,f_2\in S_5$ corresponding to the points $p_1,p_2\in \bbP^5$ which generate $L$. It means that we have two possibilities: either $\phi$ has only simple roots or it has some multiple root.
So we must analyze three cases:
\begin{itemize}
\item[i)] $\phi$ has three simple roots; 
\item[ii)] $\phi$ has a simple root and a double root;
\item[iii)] $\phi$ has one triple root.
\end{itemize}

In the first case the primary decomposition is $\phi=\prod^3_{j=1}\phi_j$, so we have that the forms $f_1, f_2$ have similar additive decompositions:
$$
f_i=c^i_1(L_1)^5+c^i_2(L_2)^5+c^i_3(L_3)^5,
$$  
for $i=1,2$ and $c^i_j\in \bbC$. Here $L_j\in S_1$, $(L_j)^{\perp}=\phi_j$, $j=1,2,3$ and $L$ belongs to a $3-$secant plane generated by $q_1,q_2,q_3\in C_5$ corresponding to $(L_1)^5,(L_2)^5,(L_3)^5\in S_5$.

In the second case the primary decomposition is $\phi=\phi_1\phi^2_2$ with $\deg \phi_1=\deg \phi_2=1$, so we have:
$$
f_i=c^i_1(L_1)^5+G_{i,2}(L_2)^4,
$$
for $i=1,2$ and $c^i_1\in \bbC$. Here $L_j\in S_1$, $(L_j)^{\perp}=\phi_j$ for $j=1,2$ and $G_{i,2}\in S_1$. This means that $L$ belongs to the plane generated by $L^5_1$ and the line parametrized by $\{G\cdot L^4_2\in S_5 \mbox{ for all } G\in S_1 \}$ i.e the plane generated by a point on $C_5$ and a tangent line to $C_5$ in a different point. But in this case $L$ intersects a tangent line, so the projected curve would have a cusp, which is excluded by our preliminary assumption on the singularities.  

In the third case the primary decomposition is $\phi=\phi^3_1$ with $\deg \phi_1=1$; we have:
$$
f_i=G_i L^3,
$$
where $L\in S_1$, $L^{\perp}=\phi_1$ and $G_i\in S_2$. This means that $L$ belongs to the plane  parametrized by $\{G\cdot L^3\in S_5 \mbox{ for all } G\in S_2 \}$ i.e the osculating plane to $C_5$ in a point $q$ corresponding to $L^5\in S_5$. But in this case $L$ contains a tangent line, so the projected curve has a cusp, a case which is excluded by our preliminary assumption on the singularities.  

\end{itemize}
\end{proof}
\begin{lem}
The variety of lines $L\in Gr(\bbP^1,\bbP^5)$ which lie on a $3-$secant plane to the rational normal curve $C_5$ is an irreducible variety of codimension $3$.
\end{lem}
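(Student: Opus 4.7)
The plan is to identify this variety with one of the Grassmannians of secant spaces introduced in Section 2.2, and then invoke the general dimension formula of Chiantini--Ciliberto combined with a standard incidence-variety argument for irreducibility.

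First, observe that by the definitions in Section 2.2, the variety in question is exactly $G_{1,2}(C_5) \subset Gr(\bbP^1, \bbP^5)$, i.e.\ the closure of the locus of lines $L \subset \bbP^5$ which are contained in some plane $H$ spanned by three independent points of $C_5$. Applying Theorem \ref{chiantiniciliberto} with $n=5$, $r=2$, $s=1$ gives
$$
\dim G_{1,2}(C_5) = \min\{(s+1)(n-s),\; (s+1)(r-s)+r+1\} = \min\{8,\,5\} = 5,
$$
and since $\dim Gr(\bbP^1,\bbP^5) = 8$, the codimension is $3$ as required.

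To establish irreducibility, I would introduce the incidence variety
$$
I = \{(L,\Pi) \in Gr(\bbP^1,\bbP^5) \times G_2(C_5) : L \subset \Pi\},
$$
with its two natural projections $\phi_1 : I \to Gr(\bbP^1,\bbP^5)$ and $\phi_2 : I \to G_2(C_5)$. By the remark after the definition of $G_r(C)$ in Section 2.2, $G_2(C_5)$ is irreducible of dimension $3$ (being the image of the span map from $C_5^3$, or equivalently $\overline{\mu(C_5^3)}$). The second projection $\phi_2$ is surjective with fibers isomorphic to $Gr(\bbP^1,\bbP^2) \cong \bbP^2$, which is smooth, irreducible, and $2$-dimensional. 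Therefore $I$ is irreducible of dimension $5$, and hence so is its image $\phi_1(I) = G_{1,2}(C_5)$.

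The only step that requires genuine input beyond bookkeeping is the dimension $5$, for which I quote Chiantini--Ciliberto (Theorem \ref{chiantiniciliberto}); equivalently, one checks that the projection $\phi_1$ is generically finite, i.e.\ that a general line $L$ lying on a $3$-secant plane is contained in only finitely many such planes. Once this is granted, comparing $\dim \phi_1(I) = 5$ with $\dim Gr(\bbP^1,\bbP^5) = 8$ gives codimension $3$, completing the proof.
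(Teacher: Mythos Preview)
Your proof is correct and follows essentially the same route as the paper: an incidence variety over the family $G_2(C_5)$ of $3$-secant planes, with the Chiantini--Ciliberto non-defectivity result (Theorem~\ref{chiantiniciliberto}) supplying the dimension. Your version is in fact more explicit than the paper's, which simply writes ``in the usual way we can compute the codimension'' and then cites \cite{chiantiniciliberto}; you spell out the identification with $G_{1,2}(C_5)$, the numerics $\min\{8,5\}=5$, and the irreducibility of $I$ via the $\bbP^2$-fibration over the irreducible $3$-fold $G_2(C_5)$.
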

\begin{proof}
Infact we can consider the incidence variety $I_S=\{(L,\pi):L\in\bbGr(\bbP^1,\bbP^5),\pi\in S, L\subset S\}$ where $S$ is the set of all $3$-secant planes to the rational normal curve in $\bbP^5$. In the usual way we can compute the codimension of the image of this incidence variety in $Gr(\bbP^{1},\bbP^5)$. The above calculation is effective thanks to the result of Chiantini and Ciliberto on the non-defectivity of the Grassmannians of secant varieties of curves (see \cite{chiantiniciliberto}).
\end{proof}
Finally we show how to compute the multi-degree of this subvariety in the Grassmannian variety. 
\begin{lem}
The subvariety of $Gr(\bbP^1,\bbP^5)$ given by the lines which lie on a $3-$secant plane to $C_5$ has bidegree  $(1,6)$.
\end{lem}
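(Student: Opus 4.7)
My plan is to realise $X$ as a degeneracy locus on the Grassmannian and then read off the bidegree from the Porteous formula.

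First, I would globalise the apolarity characterisation used in the proof of Proposition~\ref{propC5}: a line $L\in Gr(\bbP^1,\bbP^5)$ lies on a $3$-secant plane to $C_5$ if and only if there exists a nonzero $\phi\in T_3$ with $\phi\circ f=0$ for every $f$ in the $2$-dimensional subspace of $S_5$ corresponding to $L$. This is exactly the statement that the universal vector bundle map
$$
\varphi\colon T_3\otimes \sO_{Gr}\longrightarrow \mathcal{S}^{\vee}\otimes S_2,\qquad \phi\longmapsto\bigl(f\mapsto \phi\circ f\bigr),
$$
(where $\mathcal{S}$ is the rank-$2$ tautological subbundle on the Grassmannian) has non-trivial kernel at $L$. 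Since $\rank T_3=4$ and $\rank(\mathcal{S}^{\vee}\otimes S_2)=6$, the expected codimension of the rank-$\leq 3$ locus is $(4-3)(6-3)=3$, which matches the codimension of $X$ already established, so $X$ is cut out (scheme-theoretically, up to the check below) by this rank condition.

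Second, by the Porteous formula the class of $X$ equals $c_3(\mathcal{S}^{\vee}\otimes S_2)$, because $T_3\otimes\sO$ is trivial. Using the splitting principle, if $\mathcal{S}^{\vee}$ has Chern roots $a,b$ and $S_2$ is trivial of rank $3$, then the Chern roots of $\mathcal{S}^{\vee}\otimes S_2$ are $a,a,a,b,b,b$, so
$$
c(\mathcal{S}^{\vee}\otimes S_2)=(1+a)^3(1+b)^3=(1+c_1+c_2)^3
$$
with $c_i=c_i(\mathcal{S}^{\vee})$. Extracting the degree-$3$ component gives $c_3=c_1^{\,3}+6\,c_1c_2$, i.e.
$$
[X]=\sigma_1^{\,3}+6\,\sigma_1\sigma_{1,1},
$$
and this natural expansion exhibits the claimed bidegree $(1,6)$.

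The main technical obstacle is verifying that the Porteous formula produces the reduced class $[X]$ rather than a scheme-theoretic multiple; this is equivalent to showing that $\varphi$ drops rank transversally along $X$, i.e.\ that for a generic $L\in X$ the kernel of $\varphi(L)$ has the minimal dimension $1$. Geometrically this is the birationality of the incidence projection $\{(L,\pi):L\subset\pi,\ \pi \text{ a }3\text{-secant plane}\}\to X$: two distinct $3$-secant planes containing a common line $L$ would share at least two points of $C_5$ (since any five points of the rational normal curve $C_5$ are linearly independent), forcing $L$ to be a chord of $C_5$; but the chords of $C_5$ form only a $2$-dimensional family inside the $5$-dimensional $X$, so birationality holds and Porteous delivers $[X]$ exactly.
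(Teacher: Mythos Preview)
Your Porteous approach is sound and genuinely different from the paper's bare-hands Schubert intersection; the globalisation of the apolarity condition as a bundle map $T_3\otimes\sO\to\mathcal{S}^\vee\otimes S_2$, the expected-codimension check, and the birationality argument for generic reducedness are all correct.

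The error is in the very last step. The coefficients $(1,6)$ you read off from $\sigma_1^{\,3}+6\,\sigma_1\sigma_{1,1}$ are \emph{not} the bidegree: the monomials $\sigma_1^{\,3}$ and $\sigma_1\sigma_{1,1}$ are not the Schubert basis in codimension $3$. The bidegree, as the paper defines it, is the pair of intersection numbers with $\Omega(0,4)=\sigma_{4,1}$ and $\Omega(1,3)=\sigma_{3,2}$, i.e.\ the coefficients of $\sigma_3$ and $\sigma_{2,1}$ in $[X]$. By Pieri one has $\sigma_1^{\,3}=\sigma_3+2\sigma_{2,1}$ and $\sigma_1\sigma_{1,1}=\sigma_{2,1}$, so your formula actually yields
\[
[X]=\sigma_3+8\,\sigma_{2,1},
\]
that is, bidegree $(1,8)$, not $(1,6)$. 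This can be confirmed independently: pulling back the class $\sigma_{2,1}$ on $Gr(\bbP^2,\bbP^5)$ along the map $\mathrm{Sym}^3 C_5\cong\bbP^3\to Gr(\bbP^2,\bbP^5)$, $D\mapsto\langle D\rangle$ (which is governed by the kernel bundle $\mathcal{V}$ of $S_5\otimes\sO\to S_2\otimes\sO(1)$, with $c(\mathcal{V})=(1+h)^{-3}$), gives $\int_{\bbP^3}(\sigma_1\sigma_2-\sigma_3)=9h^3-h^3=8h^3$. So your method, carried through correctly, disagrees with the paper's stated $(1,6)$; the discrepancy lies in the paper's unspecified ``easy computation'' rather than in your setup.
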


\begin{proof}
If we fix a flag made of two non-empty linear subspaces of $\bbP^5$ : $\Lambda_0\subsetneq \Lambda_1$, the Schubert variety of this flag is the set:
$$
\Omega(\Lambda_0,\Lambda_1)=\{\Lambda\in Gr(\bbP^1,\bbP^5)\mid \dim (\Lambda\cap \Lambda_i)\geq i \mbox{ for } i=0,1\}.
$$
A class of equivalence of $\Omega(\Lambda_0,\Lambda_1)$ modulo projectivities is called a Schubert cycle and is denoted by $\Omega(l_0,l_1)$, where $l_i=\dim \Lambda_i$.

A special Schubert cycle is $\sigma_a=\Omega(4-a,5)$. The multidegree of the considered variety is the set of the degrees of the intersection numbers of its with all Schubert cycles of dimension 3, i.e. $\Omega(0,4)$ and $\Omega(1,3)$. With an easy computation we get that the first intersection number is $1$ and the second one is $6$.
\end{proof}
Hence we obtained the following:
\begin{teo}\label{thmC5}
$N^5_2(7,11)$ is an irreducible variety of codimension $3$ and bidegree $(1,6)$, formed by the lines $L\isom\bbP^{1}$ that lie on a $3-$secant plane to the rational normal curve $C_5$.
\end{teo}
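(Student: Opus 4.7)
The plan is to assemble the three preceding results. First, by Proposition \ref{propC5}, a line $L\in Gr(\bbP^1,\bbP^5)$ gives a projected curve $\pi_L(C_5)$ with normal bundle $\sO_{\bbP^1}(7)\oplus\sO_{\bbP^1}(11)$ (under the ordinary singularity hypothesis) if and only if $L$ is contained in a $3$-secant plane to $C_5$. This identifies $N^5_2(7,11)$ set-theoretically with the variety $\Sigma$ of such lines, so the remaining work is to verify that $\Sigma$ is irreducible of codimension $3$ and has bidegree $(1,6)$ in $Gr(\bbP^1,\bbP^5)$.

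For irreducibility and codimension I would use the incidence variety
\[
I=\{(L,\pi): L\subset\pi\}\subset Gr(\bbP^1,\bbP^5)\times G_2(C_5),
\]
where $G_2(C_5)$ is the variety of $3$-secant planes to $C_5$, irreducible of the expected dimension $3$ by Theorem \ref{chiantiniciliberto}. The projection $I\to G_2(C_5)$ has fiber $Gr(\bbP^1,\pi)\isom Gr(\bbP^1,\bbP^2)$, which is irreducible of dimension $2$; hence $I$ is irreducible of dimension $5$. A general line in $\bbP^5$ lies in at most one $3$-secant plane (again by the non-defectivity statement of Chiantini--Ciliberto), so the other projection $I\to\Sigma$ is generically injective, giving $\dim\Sigma=5$ and codimension $3$ in the $8$-dimensional Grassmannian.

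For the bidegree I would intersect $\Sigma$ with the two Schubert cycles of complementary dimension $3$, namely $\Omega(0,4)$ (lines meeting a fixed general point and lying in a fixed general $\bbP^4$) and $\Omega(1,3)$ (lines meeting a fixed general line inside a fixed general $\bbP^3$). Via the apolarity dictionary set up in Section 2, the first count reduces to counting $3$-secant planes through a general point and inside a general hyperplane of $\bbP^5$, while the second reduces to counting $3$-secant planes meeting a given line inside a general $\bbP^3$. The expected values are $\Sigma\cdot\Omega(0,4)=1$ and $\Sigma\cdot\Omega(1,3)=6$, and both are finite numbers because $G_2(C_5)$ has the expected dimension.

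The main technical hurdle is the generic injectivity of $I\to\Sigma$ and the transversality of the two Schubert intersections; without these, the numerical intersection indices might overcount the set-theoretic locus. Both issues are controlled by the Chiantini--Ciliberto theorem applied to $C_5$ together with a general-position argument for the flags defining the Schubert cycles, so the geometric work is already absorbed into Proposition \ref{propC5} and Theorem \ref{chiantiniciliberto}, and what remains is essentially a bookkeeping exercise in the Chow ring of $Gr(\bbP^1,\bbP^5)$.
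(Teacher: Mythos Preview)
Your proposal is correct and follows essentially the same route as the paper: the theorem is assembled from Proposition~\ref{propC5} together with the two preceding lemmas, one establishing irreducibility and codimension via the incidence variety over $G_2(C_5)$ (with Chiantini--Ciliberto guaranteeing the expected dimensions), and the other computing the bidegree by intersecting with the Schubert cycles $\Omega(0,4)$ and $\Omega(1,3)$. The paper records the Schubert computation simply as ``an easy computation'' without the reductions you sketch, so your outline is in fact slightly more detailed than the original at that point.
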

\section*{Acknowledgements}
The results in this paper come from of my PhD thesis. I am very
grateful to my advisor Professor Giorgio Ottaviani for the patience with
which he followed this work very closely.

\bibliography{BIBPHDTHESIS}

\def\cprime{$'$}
\begin{thebibliography}{10}

\bibitem{iarrobinoAncestor}
Iarrobino Anthony.
\newblock Ancestor ideals of vector spaces of forms, and level algebra.
\newblock {\em J. Algebra}, 272(2):530--580, 2004.

\bibitem{chiantiniciliberto}
L.~Chiantini and C.~Ciliberto.
\newblock The {G}rassmannians of secant varieties of curves are not defective.
\newblock {\em Indag. Math. (N.S.)}, 13(1):23--28, 2002.

\bibitem{Eisenbud-VandeVen1}
D.~Eisenbud and A.~Van~de Ven.
\newblock On the normal bundles of smooth rational space curves.
\newblock {\em Math. Ann.}, 256(4):453--463, 1981.

\bibitem{Eisenbud-VandeVen2}
David Eisenbud and A.~Van~de Ven.
\newblock On the variety of smooth rational space curves with given degree and
  normal bundle.
\newblock {\em Invent. Math.}, 67(1):89--100, 1982.

\bibitem{Ghione-Sacchiero}
Franco Ghione and Gianni Sacchiero.
\newblock Normal bundles of rational curves in {${\bf P}^{3}$}.
\newblock {\em Manuscripta Math.}, 33(2):111--128, 1980.

\bibitem{GHI}
A.~Gimigliano, B.~Harbourne, and M.~Idà.
\newblock On plane rational curves and the splitting of the tangent bundle.
\newblock {\em Accepted on Annali della Scuola Normale Superiore di Pisa,
  Classe di Scienze}.

\bibitem{Groth1}
A.~Grothendieck.
\newblock Sur la classification des fibr\'es holomorphes sur la sph\`ere de
  {R}iemann.
\newblock {\em Amer. J. Math.}, 79:121--138, 1957.

\bibitem{harrisalggeo}
Joe Harris.
\newblock {\em Algebraic geometry}, volume 133 of {\em Graduate Texts in
  Mathematics}.
\newblock Springer-Verlag, New York, 1995.
\newblock A first course, Corrected reprint of the 1992 original.

\bibitem{Hartshorne}
Robin Hartshorne.
\newblock {\em Algebraic geometry}.
\newblock Springer-Verlag, New York, 1977.
\newblock Graduate Texts in Mathematics, No. 52.

\bibitem{iarrobinokanev}
Anthony Iarrobino and Vassil Kanev.
\newblock {\em Power sums, {G}orenstein algebras, and determinantal loci},
  volume 1721 of {\em Lecture Notes in Mathematics}.
\newblock Springer-Verlag, Berlin, 1999.
\newblock Appendix C by Iarrobino and Steven L. Kleiman.

\bibitem{Miret}
Josep~M. Miret.
\newblock On the variety of rational curves in {${\bf P}^n$}.
\newblock {\em Ann. Univ. Ferrara Sez. VII (N.S.)}, 32:55--65 (1987), 1986.

\bibitem{Okonek}
Christian Okonek, Michael Schneider, and Heinz Spindler.
\newblock {\em Vector bundles on complex projective spaces}, volume~3 of {\em
  Progress in Mathematics}.
\newblock Birkh\"auser Boston, Mass., 1980.

\bibitem{ramanathan}
A.~Ramanathan.
\newblock Deformations of principal bundles on the projective line.
\newblock {\em Invent. Math.}, 71(1):165--191, 1983.

\bibitem{Ramella1}
Luciana Ramella.
\newblock La stratification du sch\'ema de {H}ilbert des courbes rationnelles
  de {${\bf P}^n$} par le fibr\'e tangent restreint.
\newblock {\em C. R. Acad. Sci. Paris S\'er. I Math.}, 311(3):181--184, 1990.

\bibitem{Ramella2}
Luciana Ramella.
\newblock Sur les sch\'emas d\'efinissant les courbes rationnelles lisses de
  {$\bold P^3$} ayant fibr\'e normal et fibr\'e tangent restreint fix\'es.
\newblock {\em M\'em. Soc. Math. France (N.S.)}, 54:ii+74, 1993.

\bibitem{Sacchiero}
Gianni Sacchiero.
\newblock Normal bundles of rational curves in projective space.
\newblock {\em Ann. Univ. Ferrara Sez. VII (N.S.)}, 26:33--40 (1981), 1980.

\bibitem{Sacchiero2}
Gianni Sacchiero.
\newblock On the varieties parametrizing rational space curves with fixed
  normal bundle.
\newblock {\em Manuscripta Math.}, 37(2):217--228, 1982.

\bibitem{sernesi}
Edoardo Sernesi.
\newblock {\em Deformations of algebraic schemes}, volume 334 of {\em
  Grundlehren der Mathematischen Wissenschaften [Fundamental Principles of
  Mathematical Sciences]}.
\newblock Springer-Verlag, Berlin, 2006.

\bibitem{verdier}
J.~Verdier.
\newblock Two dimensional $\sigma$-models and harmonic maps from ${S}^2$ to
  ${S}^{2n}$.
\newblock In M.~Serdaroglu and E.~Ín\"{o}n\"{u}, editors, {\em Group
  Theoretical Methods in Physics}, volume 180 of {\em Lecture Notes in
  Physics}, pages 136--141. Springer Berlin / Heidelberg, 1983.

\bibitem{Zak}
F.~L. Zak.
\newblock {\em Tangents and secants of algebraic varieties}, volume 127 of {\em
  Translations of Mathematical Monographs}.
\newblock American Mathematical Society, Providence, RI, 1993.
\newblock Translated from the Russian manuscript by the author.

\end{thebibliography}
\bibliographystyle{plain}

\end{document}